\newtheorem{theorem}{Theorem}
\theoremstyle{plain}
\newtheorem{corollary}{Corollary}
\newtheorem{definition}{Definition}
\newtheorem{example}{Example}
\newtheorem{lemma}{Lemma}
\newtheorem{proposition}{Proposition}
\newtheorem{remark}{Remark}
\numberwithin{equation}{section}
\begin{document}
\title{A Generalized Wintgen inequality for Legendrian submanifolds in
almost Kenmotsu manifolds}
\author{R.G\"{O}R\"{U}N\"{U}\c{S}}
\address{Uluda\u{g} University,Institute of Science, Bursa -TURKEY}
\email{rukengorunus16@gmail.com}
\author{ I. K\"{U}PELI ERKEN}
\address{Faculty of Natural Sciences, Architecture and Engineering,
Department of Mathematics, Bursa Technical University, Bursa-TURKEY}
\email{irem.erken@btu.edu.tr}
\author{Aziz YAZLA}
\address{Uludag University, Science Institute, Gorukle 16059, Bursa-TURKEY}
\email{501411002@ogr.uludag.edu.tr}
\author{C. MURATHAN}
\address{Uluda\u{g} University, Faculty\ of\ Art\ and\ Sciences, Department\
of\ Mathematics, Bursa -TURKEY}
\email{cengiz@uludag.edu.tr}
\subjclass{53C25, 53C40, 53D10.}
\keywords{Statistical Manifolds, Cosymplectic Manifolds, Kenmotsu Manifolds,
Wintgen inequality, Legendrian submanifold}

\begin{abstract}
Main interest of the present paper is to obtain the generalized Wintgen
inequality for Legendrian submanifolds in almost Kenmotsu manifolds.
\end{abstract}

\maketitle

\section{Introduction}

One of the most fundamental problems in a Riemannian submanifold theory is
to establish a simple sharp relationships between intrinsic and extrinsic
invariants. The main extrinsic invariants are the extrinsic normal
curvature, the squared mean curvature and the main intrinsic invariants
include the Ricci curvature and the scalar curvature. In 1979, Wintgen \cite%
{wintgen} obtained a basic inequality involving Gauss curvature $K$, normal
curvature $K^{\perp }$ and the squared mean curvature $\left \Vert
H\right
\Vert ^{2}$ of an oriented surface $M^{2}$ in $E^{4}$ , that is ,

\begin{equation}
K\leq \left \Vert H\right \Vert ^{2}-\left \vert K^{\perp }\right \vert
\label{WINTGEN}
\end{equation}%
with the equality holding if and only if the ellipse of curvature of $M^{2}$
in $E^{4}$ is a circle. The inequality (\ref{WINTGEN}), now called Wintgen
inequality, attracted the attention of several authors.

Over time P. J. De Smet, F. Dillen, L. Verstraelen and L. Vrancken \cite%
{Brno} \ gave a conjecture for Wintgen inequality in an $n$-dimensional
Riemannian submanifold \ $M^{n}$ of a real space form $R^{n+p}(c),$ namely, 
\begin{equation}
\rho \leq \left \Vert H\right \Vert ^{2}-\rho ^{\perp }+c,  \label{DDVV}
\end{equation}%
where%
\begin{equation}
\rho =\frac{2}{n(n-1)}\dsum \limits_{1\leq i<j\leq n}\left \langle
R(e_{i},e_{j})e_{j},e_{i}\right \rangle ,  \label{ro}
\end{equation}%
is the normalized scalar curvature of $M^{n}$%
\begin{equation}
\rho ^{\perp }=\frac{2}{n(n-1)}\sqrt{\dsum \limits_{1\leq i<j\leq n}\dsum
\limits_{1\leq \alpha <\beta \leq m}\left \langle R^{\perp
}(e_{i},e_{j})u_{\alpha },u_{\beta }\right \rangle ^{2},}  \label{ro dik}
\end{equation}%
where $\{e_{1},...,e_{n}\}$ and $\{u_{1},...,u_{p}\}$ respectively
orthonormal frames of \ tangent bundle $TM$ and normal bundle $T^{\perp }M$
and they also proved that this conjecture holds for codimension $p=2$. This
type of inequality known later came to be known as the DDVV conjecture. A
special version of the DDVV conjecture,%
\begin{equation}
\rho \leq \left \Vert H\right \Vert ^{2}+c,  \label{CHEN}
\end{equation}%
was proved by B.Y. Chen in \cite{chen}. F. Dillen, J. Fastenakels and J. Van
Der Vekens \cite{dillen2} proved that DDVV conjecture is equivalent to an
algebraic conjecture. Recently DDVV-conjecture was proven by Z. Lu \cite{lu}
and by Ge and Z. Tang \cite{Ge} indepently.

In recent years, I. Mihai \cite{ion} proved DDVV conjecture for Lagrangian
submanifolds in complex space forms and obtained Wintgen inequality for
Legendrian submanifolds in Sasakian space forms (see \cite{ion1}). On the
other hand, the product spaces $S^{n}(c)\times 
\mathbb{R}
$ and $%
\mathbb{R}
\times H^{n}(c)$ are studied to obtain generalized Wintgen inequality by Q.
Chen and Q. Cui \cite{Qchen}. Then J. Roth \cite{ROTH} extended DDVV
inequality to submanifolds of warped product manifolds $%
\mathbb{R}
\times _{f}M^{n}(c)$. Furuhata et al. \cite{FURUHATA} studied on Kenmotsu
statistical manifolds and warped product. It is well known that a Kenmotsu
manifold is locally considered as the warped product of a Kaehler manifold
and a line.

Nowadays Wintgen inequality of statistical submanifolds in statistical
manifolds of constant curvature has been studied in \cite{AMM}, \cite{AYMM}
and \cite{AM}. The generalized Wintgen inequality for statistical
submanifolds of statistical warped product manifolds is proved in \cite{CMBS}

Motivated by the studies of the above author, in this study, we consider
generalized Wintgen inequality for Legendrian submanifolds in almost
Kenmotsu manifolds.

\section{Preliminaries}

\label{preliminaries}

An almost Hermitian manifold $(N^{2n},g,J)$ is a smooth manifold endowed
with an almost complex structure $J$ and a Riemannian metric $g$ compatible
in the sense

\begin{equation*}
J^{2}X=-X,\text{ }g(JX,Y)=-g(X,JY)
\end{equation*}%
for any $X,Y\in \Gamma (TN).$ The fundamental $2$-form $\Omega $ of an
almost Hermitian manifold is defined by%
\begin{equation*}
\Omega (X,Y)=g(JX,Y)
\end{equation*}%
for any vector fields $X,Y$ on $N$. \ For an almost Hermitian manifold $%
(N^{2n},g,J)$ with Riemannian connection $\nabla $, the fundamental $2$-form 
$\Omega $ \ and the Nijenhuis torsion of $J$,\ $N_{J}$ satisfy 
\begin{equation}
2g((\nabla _{X}J)Y,Z)=g(JX,N_{J}(Y,Z)+3d\Omega (X,JY,JZ)-3d\Omega (X,Y,Z)
\label{NJ1}
\end{equation}%
where $N_{J}(X,Y)=[X,Y]-[JX,JY]+J[X,JY]+J[JX,Y]$ (see \cite{yano})$.$An
almost Hermitian manifold is said to be an almost Kaehler manifold if its
fundamental form $\Omega $ is closed, that is, $d\Omega $ $=0.$ If $d\Omega $
$=0$ and $N_{J}=0,$ the structure is called Kaehler.Thus by (\ref{NJ1}), an
almost Hermitian manifold $(N,J,g)$ is Kaehler if and only if its almost
complex structure $J$ is parallel with respect to the Levi-Civita connection 
$\nabla ^{0}$, that is, $\nabla ^{0}J=0$ (\cite{yano}).

It is known that a Kahlerian manifold $N$ is of constant holomorphic
sectional curvature $c$ if and only if 
\begin{equation}
R(X,Y)Z=\frac{c}{4}(g(X,Z)Y-g(Y,Z)X+g(JX,Z)Y-g(JY,Z)JX+2g(JX,Y)JZ),
\label{HOLC}
\end{equation}
and is denoted by $N(c)$ (see \cite{yano}).

Let $M$ be a $(2n+1)$-dimensional differentiable manifold and $\phi $ is a $%
(1,1)$ tensor field, $\xi $ is a vector field and $\eta $ is a one-form on $%
M.$ If $\phi ^{2}=-Id+\eta \otimes \xi ,\quad \eta (\xi )=1$ then $(\phi
,\xi ,\eta )$ is called an almost contact structure on $M$ . The manifold $M$
is said to be an almost contact manifold if it is endowed with an almost
contact structure \cite{blair}.

Let $M$ be an almost contact manifold. $M$ will be called an almost contact
metric manifold if it is additionally endowed with a Riemannian metric $g$ ,
i.e.%
\begin{equation}
g(\phi X,\phi Y)=g(X,Y)-\eta (X)\eta (Y).  \label{1}
\end{equation}

For such manifold, we have 
\begin{equation}
\eta (X)=g(X,\xi ),\text{ }\phi (\xi )=0,\text{ }\eta \circ \phi =0.
\label{2}
\end{equation}

Furthermore, a $2$-form $\Phi $ is defined%
\begin{equation}
\Phi (X,Y)=g(\phi X,Y),  \label{3}
\end{equation}%
and usually \ is called fundamental form.

~On an almost contact manifold, the $(1,2)$-tensor field $N^{(1)}$ is
defined by%
\begin{equation*}
N^{(1)}(X,Y)=\left[ \phi ,\phi \right] (X,Y)-2d\eta (X,Y)\xi ,
\end{equation*}%
where $\left[ \phi ,\phi \right] $ is the Nijenhuis torsion of $\phi $%
\begin{equation*}
\left[ \phi ,\phi \right] (X,Y)=\phi ^{2}\left[ X,Y\right] +\left[ \phi
X,\phi Y\right] -\phi \left[ \phi X,Y\right] -\phi \left[ X,\phi Y\right] .
\end{equation*}

If $N^{(1)}$ vanishes identically, then the almost contact manifold
(structure) is said to be normal \cite{blair}. The normality condition says
that the almost complex structure $J$ defined on $M\times 
\mathbb{R}
$%
\begin{equation*}
J(X,\lambda \frac{d}{dt})=(\phi X+\lambda \xi ,\eta (X)\frac{d}{dt}),
\end{equation*}%
is integrable.

An almost contact metric manifold $M^{2n+1}$, with a\ structure $(\phi ,\xi
,\eta ,g)$ is said to be an almost cosymplectic manifold, if 
\begin{equation}
d\eta =0,\quad d\Phi =0.  \label{4}
\end{equation}%
If additionally normality conditon is fulfilled, then manifold is called
cosymplectic.

On the other hand, Kenmotsu studied in \cite{kenmotsu} another class of
almost contact manifolds, defined by the following conditions on the
associated almost contact structure%
\begin{equation}
d\eta =0,\quad d\Phi =2\eta \wedge \Phi .  \label{5}
\end{equation}%
A normal almost Kenmotsu manifold is said to be a Kenmotsu manifold.

\section{\protect \bigskip Statistical Manifolds}

Let $(M,g)$ be a Riemannian manifold and $\nabla $ an affine connection on $%
M $. An affine connection $\nabla ^{\ast }$ is said to be dual connection of 
$\nabla $ if%
\begin{equation}
Zg(X,Y)=g(\nabla _{Z}X,Y)+g(X,\nabla _{Z}^{\ast }Y)  \label{STATISTIC}
\end{equation}%
for any $X,Y,Z\in \Gamma (M).$The notion of \textquotedblleft conjugate
connection" is given an excellent survey by Simon (\cite{NOSI}, \cite{SI}).
In the triple $(g,\nabla ,\nabla ^{\ast })$ is called a dualistic structure
on $M$. It appears that $(\nabla ^{\ast })^{\ast }$ $=\nabla $. The
statistical model often forms a geometrical manifold so that the geometry of
a manifold plays an important role in statistics. The manifold structure of
statistical distributions was first introduced by Amari \cite{Amari} and
used by Lauritzen \cite{LA}.

/A statistical manifold $(M,\nabla ,g)$ is a Riemannian manifold $(M,g)$
endowed torsion free connection $\nabla $ such that the Codazzi equation%
\begin{equation}
(\nabla _{X}g)(Y,Z)=(\nabla _{Y}g)(X,Z)  \label{ST1}
\end{equation}%
holds for holds for any $X,Y,Z$ $\in $ $\Gamma (TM)$ (see \cite{Amari} ). If 
$(M,\nabla ,g)$ is a statistical manifold, so is $(M,\nabla ^{\ast },g)$.
For a statistical manifold $(M,g$,$\nabla $,$\nabla ^{\ast }$) the
difference $(1,2)$ tensor $\mathcal{K}$ of \ a torsion free affine
connection $\nabla $ and Levi-Civita connection $\nabla ^{0}$ is defined as%
\begin{equation}
\mathcal{K}_{X}Y=\mathcal{K}(X,Y)=\nabla _{X}Y-\nabla _{X}^{0}Y.  \label{K1}
\end{equation}%
Because of $\nabla $ and $\nabla ^{0}$ are torsion free, we have%
\begin{equation}
\mathcal{K}_{X}Y=\mathcal{K}_{Y}X,\text{ \  \ }\ g(\mathcal{K}_{X}Y,Z)=g(Y,%
\mathcal{K}_{X}Z)  \label{K2}
\end{equation}%
for any $X,Y,Z\in \Gamma (TM)$. By (\ref{STATISTIC}) and (\ref{K1}), one can
obtain 
\begin{equation}
\mathcal{K}_{X}Y=\nabla _{X}^{0}Y-\nabla _{X}^{\ast }Y.  \label{K3}
\end{equation}%
Using (\ref{K1}) and (\ref{K3}), we find 
\begin{equation}
2\mathcal{K}_{X}Y=\nabla _{X}Y-\nabla _{X}^{\ast }Y.  \label{K4}
\end{equation}%
By (\ref{K1}), we have 
\begin{equation}
g(\nabla _{X}Y,Z)=g(\mathcal{K}_{X}Y,Z)+g(\nabla _{X}^{0}Y,Z).  \label{K5}
\end{equation}%
It can \ be also shown that any torsion-free affine connection $\nabla $ has
a dual connection given by 
\begin{equation}
\nabla ^{0}=\frac{1}{2}(\nabla +\nabla ^{\ast }),
\end{equation}%
where $\nabla ^{0}$ is Levi-Civita connection of the Riemannian manifold $%
(M,g)$. If $\nabla =\nabla ^{\ast }$ then ($M,\nabla ,g)$ is called trivial
statistical manifold.

Denote by $R$ and $R^{\ast }$ the curvature tensors on $M$ with respect to
the affine connection $\nabla $ and its conjugate $\nabla ^{\ast }$,
respectively. Then the relation between $R$ and $R^{\ast }$ can be given as
following%
\begin{equation}
g(R(X,Y)Z,W)=-g(Z,R^{\ast }(X,Y)W)  \label{CONJUGATE2}
\end{equation}%
for any $X,Y,Z,W\in \Gamma (TM).$

By using (\ref{K1}) and (\ref{K3}), we have 
\begin{equation*}
R(X,Y)Z+R^{\ast }(X,Y)Z=2R^{0}(X,Y)Z+2[\mathcal{K},\mathcal{K}](X,Y)Z,
\end{equation*}%
where $[\mathcal{K},\mathcal{K}](X,Y)Z=[\mathcal{K}_{X},\mathcal{K}_{Y}]Z=%
\mathcal{K}_{X}\mathcal{K}_{Y}Z-\mathcal{K}_{Y}\mathcal{K}_{X}Z$ (see \cite%
{OPOZDA} ).The $[\mathcal{K},\mathcal{K}]$ is a curvature-like tensor.

In \cite{TO}, L.Todjihounde gave a method how to establish a dualistic
structure on the warped product manifold $.$If we adapt this method for $%
I\times _{f}N$, we have

\begin{proposition}[ \protect \cite{TO}]
\label{TO1} Let $(%
\mathbb{R}
,dt,\nabla )$ be a trivial statistical manifold and $(N,g_{N},^{N}\nabla $, $%
^{N}\nabla ^{\ast })$ be a statistical manifold . If $\ $the connections $%
\bar{\nabla}$ and $\bar{\nabla}^{\ast }\ $\ satisfy the following relations
on $%
\mathbb{R}
\times N$

(a) $\bar{\nabla}_{\bar{\partial}_{t}}\bar{\partial}_{t}=\nabla _{\partial
_{t}}\partial _{t}=0$

(b) $\bar{\nabla}_{\bar{\partial}_{t}}\bar{X}=\bar{\nabla}_{\bar{X}}\bar{%
\partial}_{t}=\frac{f^{\prime }(t)}{f(t)}X$

(c) $\bar{\nabla}_{\bar{X}}\bar{Y}$ $=$ $\ ^{N}\nabla _{X}Y-\frac{<X,Y>}{f}%
f^{\prime }(t)\partial _{t}$

and

(i) $\bar{\nabla}_{\bar{\partial}_{t}}^{\ast }\bar{\partial}_{t}=\nabla
_{\partial _{t}}^{\ast }\partial _{t}=0$,

(ii)$\bar{\nabla}_{\bar{\partial}_{t}}^{\ast }\bar{X}=\bar{\nabla}_{\bar{X}%
}^{\ast }\bar{\partial}_{t}=\frac{f^{\prime }(t)}{f(t)}X$

(iii) $\bar{\nabla}_{\bar{X}}^{\ast }\bar{Y}$ $=$ $\ ^{N}\nabla _{X}^{\ast
}Y-\frac{<X,Y>}{f}f^{\prime }(t)\partial _{t}$

then $(%
\mathbb{R}
\times _{f}N,<,>,\bar{\nabla},\bar{\nabla}^{\ast })$ is a statistical
manifold, where $\bar{X},\bar{Y}$ are vertical lifts of \ $X,Y\in \Gamma
(TN) $ and $\bar{\partial}_{t}$ $=\frac{\partial }{\partial t}$is horizontal
lift of $\partial _{t}$ and\ the notation is simplified by writing $f$ for $%
f\circ \pi $ and $\func{grad}f$ for $\func{grad}(f\circ \pi )$.$.$
\end{proposition}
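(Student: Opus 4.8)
The plan is to verify that the pair $(\bar{\nabla},\bar{\nabla}^{\ast})$ defined by conditions (a)--(c) and (i)--(iii) genuinely constitutes a statistical structure on the warped product $\mathbb{R}\times_f N$. By definition this requires two things: first, that $\bar{\nabla}$ and $\bar{\nabla}^{\ast}$ are torsion-free affine connections satisfying the Codazzi-type compatibility condition~(\ref{STATISTIC}), namely $Z\langle X,Y\rangle = \langle \bar{\nabla}_Z X,Y\rangle + \langle X,\bar{\nabla}^{\ast}_Z Y\rangle$; and second (equivalently, in view of~(\ref{ST1})), that each connection together with the warped metric $\langle,\rangle = dt^2 + f^2 g_N$ satisfies the Codazzi equation. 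Since the roles of $\bar{\nabla}$ and $\bar{\nabla}^{\ast}$ are symmetric under interchanging (a)--(c) with (i)--(iii), it suffices to establish the duality relation~(\ref{STATISTIC}) directly, because torsion-freeness of each connection can be checked separately and then duality forces the rest.

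First I would confirm that $\bar{\nabla}$ is torsion-free. The torsion $\bar{T}(U,V)=\bar{\nabla}_U V - \bar{\nabla}_V U - [U,V]$ must be checked on the three types of argument pairs: $(\bar{\partial}_t,\bar{\partial}_t)$, $(\bar{\partial}_t,\bar{X})$, and $(\bar{X},\bar{Y})$. For horizontal-horizontal pairs, since $\bar\partial_t$ commutes with vertical lifts and the $\frac{f'}{f}X$ terms are symmetric, these vanish; for the vertical-vertical case one uses that $^N\nabla$ is torsion-free together with the symmetry of $\langle X,Y\rangle$ in the correction term $-\frac{\langle X,Y\rangle}{f}f'\partial_t$. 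The same computation applied to (i)--(iii) shows $\bar{\nabla}^{\ast}$ is torsion-free. Next I would verify the duality relation~(\ref{STATISTIC}) by evaluating $Z\langle X,Y\rangle$ on each combination of horizontal and vertical arguments. The main computational engine here is the standard warped-product metric formula: $\langle\bar X,\bar Y\rangle = f^2 g_N(X,Y)$, $\langle\bar\partial_t,\bar\partial_t\rangle = 1$, and $\langle\bar\partial_t,\bar X\rangle = 0$, so that differentiating along $\bar\partial_t$ produces the factor $2ff'g_N(X,Y)$, which must be matched against the $\frac{f'}{f}X$ terms from (b) and (ii).

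The key step is the purely vertical case: one must show
\begin{equation}
\bar X\langle \bar Y,\bar Z\rangle = \langle \bar{\nabla}_{\bar X}\bar Y,\bar Z\rangle + \langle \bar Y, \bar{\nabla}^{\ast}_{\bar X}\bar Z\rangle. \label{plan1}
\end{equation}
Here the left side expands using $\langle\bar Y,\bar Z\rangle = f^2 g_N(Y,Z)$, while on the right the terms involving $^N\nabla_X Y$ and $^N\nabla^{\ast}_X Z$ combine to give $f^2$ times the statistical-duality relation on $(N,g_N,{}^N\nabla,{}^N\nabla^{\ast})$, and the correction terms $-\frac{\langle X,Y\rangle}{f}f'\partial_t$ contribute nothing because they are horizontal and pair to zero against the vertical $\bar Z$. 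The only subtlety is bookkeeping the factor $f^2$ and the warping derivative correctly, so that the hypothesis that $N$ is statistical is exactly what closes the identity.

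The main obstacle I anticipate is the careful separation of horizontal and vertical components and keeping the metric-scaling factors of $f$ and $f^2$ consistent across the mixed cases $(\bar\partial_t,\bar X,\bar Y)$, where the correction terms in (c)/(iii) interact with the $\frac{f'}{f}$ terms in (b)/(ii); these are the cases where an error in the warping factor would break duality. Once~(\ref{plan1}) and the mixed cases are verified, the Codazzi equation~(\ref{ST1}) follows automatically from torsion-freeness plus duality, since for torsion-free connections the statistical condition~(\ref{STATISTIC}) and the Codazzi condition~(\ref{ST1}) are equivalent. I would therefore conclude that $(\mathbb{R}\times_f N,\langle,\rangle,\bar{\nabla},\bar{\nabla}^{\ast})$ is a statistical manifold, as claimed.
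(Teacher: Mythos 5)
The paper gives no proof of this proposition at all --- it is quoted verbatim from Todjihounde \cite{TO} --- so there is nothing in the text to compare your argument against; I can only assess it on its own. Your strategy is correct and is the natural one: both $\bar{\nabla}$ and $\bar{\nabla}^{\ast}$ are torsion-free because the formulas (a)--(c) and (i)--(iii) are symmetric in their arguments and ${}^{N}\nabla$, ${}^{N}\nabla^{\ast}$ are torsion-free; the duality relation (\ref{STATISTIC}) checks out case by case, the one delicate point being the mixed case $\bar{X}\langle \bar{Y},\bar{\partial}_{t}\rangle=0$, which closes exactly because $\langle X,Y\rangle$ in (c)/(iii) is the warped metric $f^{2}g_{N}(X,Y)$, so $-\frac{\langle X,Y\rangle}{f}f'+\frac{f'}{f}\langle Y,X\rangle=0$; and your final reduction is sound, since mutual duality of two torsion-free connections forces $\frac{1}{2}(\bar{\nabla}+\bar{\nabla}^{\ast})$ to be the Levi-Civita connection, hence $\mathcal{K}$ to be totally symmetric, hence the Codazzi equation (\ref{ST1}) to hold. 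The only cosmetic quibble is your phrasing that (\ref{STATISTIC}) and (\ref{ST1}) are ``equivalent for torsion-free connections'': the accurate statement is that torsion-freeness of \emph{both} connections together with duality implies Codazzi, which is exactly what your checks establish.
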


Assuming $(%
\mathbb{R}
,dt,\nabla )$ is trival statistical manifold and denoting $R$ and $R^{\ast } 
$ are curvature tensors respect to the dualistic structure $(<,>,\bar{\nabla}%
,\bar{\nabla}^{\ast })$ on $%
\mathbb{R}
\times N$ \ then we can give the following lemma by Proposition \ref{TO1}.
In practise, $(-)$ is ommited from lifts.

\begin{lemma}[ \protect \cite{TO}]
\label{TO3} $(\tilde{M}=%
\mathbb{R}
\times _{f}N,<,>,\bar{\nabla},\bar{\nabla}^{\ast })$ is a statistical
manifold, be a statistical warped product. If $U,V,W\in \Gamma (N),$ then:

$(a)$ $R(V,\partial _{t})\partial _{t}=-\frac{f^{\prime \prime }(t)}{f(t)}V,$

$(b)$ $R(V,U)\partial _{t}=0,$

$(c)$ $R(\partial _{t},V)W=-\frac{f^{\prime \prime }(t)}{f(t)}<V,W>\partial
_{t},$

$(d)$ $R(V,W)U=R^{N}(V,W)U-\frac{(f^{\prime }(t))^{2}}{(f(t))^{2}}%
[<W,U>V-<V,U>W],$

and

$(a^{\ast })$ $R^{\ast }(V,\partial _{t})\partial _{t}=-\frac{f^{\prime
\prime }(t)}{f(t)}V,$

$(b^{\ast })$ $R^{\ast }(V,U)\partial _{t}=0,$

$(c^{\ast })$ $R^{\ast }(\partial _{t},V)W=-\frac{f^{\prime \prime }(t)}{f(t)%
}<V,W>\partial _{t},$

$(d^{\ast })$ $R^{\ast }(V,W)U=R^{\ast N}(V,W)U-\frac{(f^{\prime }(t))^{2}}{%
(f(t))^{2}}[<W,U>V-<V,U>W].$

where $R^{^{\ast }N}$ and $R^{N}$ are curvature tensors of $N$ with respect
to the connections $^{\text{\ }N}\nabla $ and $^{N}\nabla ^{\ast }$ .
\end{lemma}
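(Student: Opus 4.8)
The plan is to compute everything from the definition $R(X,Y)Z = \bar\nabla_X\bar\nabla_Y Z - \bar\nabla_Y\bar\nabla_X Z - \bar\nabla_{[X,Y]}Z$, substituting the connection rules (a)--(c) of Proposition \ref{TO1} and handling the four configurations of horizontal ($\partial_t$) and vertical ($N$-tangent) arguments separately. Before the case analysis I would record the facts that keep the bookkeeping under control: for lifts of vector fields on $N$ one has $[\partial_t,V]=0$, while $[V,W]$ is again vertical and, since ${}^N\nabla$ is torsion free, equals ${}^N\nabla_V W - {}^N\nabla_W V$; the scalar $\frac{f'}{f}$ depends only on $t$, so every vertical field annihilates it; by rule (b) the operator $\bar\nabla_{\partial_t}$ rescales each vertical field by $\frac{f'}{f}$; and the $\partial_t$-part of rule (c) is $\frac{\langle V,W\rangle}{f}f' = f f'\,g_N(V,W)$, whose $t$-dependence sits entirely in the scalar $f f'$ while its $N$-dependence sits in $g_N(V,W)$.

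Cases (a) and (b) are then short. In (a) both $\bar\nabla_V\bar\nabla_{\partial_t}\partial_t$ and the bracket term drop out, and expanding $\bar\nabla_{\partial_t}\bar\nabla_V\partial_t = \bar\nabla_{\partial_t}\!\left(\frac{f'}{f}V\right)$ by the Leibniz rule produces $\frac{f''}{f}V$, which gives (a). In (b), after substituting (b)--(c) of Proposition \ref{TO1}, the $\partial_t$-components cancel by the symmetry of $g_N$ and the tangential remainder is $\frac{f'}{f}\left({}^N\nabla_V U - {}^N\nabla_U V - [V,U]\right)$, which vanishes by torsion-freeness. Case (c) is a direct but slightly longer bookkeeping: the tangential parts of $\bar\nabla_{\partial_t}\bar\nabla_V W$ and $\bar\nabla_V\bar\nabla_{\partial_t}W$ cancel, and differentiating the $t$-dependent scalar $f f'$ collapses the coefficient of $\partial_t$ to $-\frac{f''}{f}\langle V,W\rangle$.

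The real work is case (d), which I expect to be the main obstacle. Writing $\bar\nabla_W U = {}^N\nabla_W U - \frac{\langle W,U\rangle}{f}f'\partial_t$ and then applying $\bar\nabla_V$ (and symmetrically for $\bar\nabla_W\bar\nabla_V U$), the expression splits into a part tangent to $N$ and a part along $\partial_t$. The tangential part assembles into the curvature $R^N(V,W)U$ of ${}^N\nabla$ together with the cross terms $-\frac{(f')^2}{f^2}\left[\langle W,U\rangle V - \langle V,U\rangle W\right]$ arising from $\bar\nabla_V\partial_t = \frac{f'}{f}V$; this is exactly the claimed right-hand side. What remains is to show the $\partial_t$-component is zero, and this is where I do not expect to get around the statistical hypothesis: after collecting the terms in which $V$ or $W$ differentiates the scalar $g_N(\,\cdot\,,\,\cdot\,)$ and using $[V,W] = {}^N\nabla_V W - {}^N\nabla_W V$, the coefficient of $\partial_t$ reduces to a multiple of $\left({}^N\nabla_W g_N\right)(V,U) - \left({}^N\nabla_V g_N\right)(W,U)$, which vanishes precisely by the Codazzi equation (\ref{ST1}) defining the statistical manifold $N$. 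Thus (d) is the one place where the statistical (as opposed to merely Riemannian) structure is genuinely used. The dual identities $(a^\ast)$--$(d^\ast)$ then need no new idea: replacing $\bar\nabla,{}^N\nabla$ by $\bar\nabla^\ast,{}^N\nabla^\ast$ and invoking (i)--(iii) of Proposition \ref{TO1}, the same four computations go through verbatim because $(N,{}^N\nabla^\ast,g_N)$ is again a statistical manifold and hence also satisfies (\ref{ST1}).
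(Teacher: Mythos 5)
Your computation is correct and is exactly the direct verification from Proposition \ref{TO1} that the paper (following \cite{TO}) has in mind; the paper itself states the lemma without proof, simply citing \cite{TO}. Your diagnosis of case (d) is also right: the $\partial_t$-component reduces to $({}^N\nabla_V g_N)(W,U)-({}^N\nabla_W g_N)(V,U)$ after using torsion-freeness, and it is precisely the Codazzi equation (\ref{ST1}) for $({}^N\nabla,g_N)$ (and for $({}^N\nabla^{\ast},g_N)$ in the dual case) that makes it vanish, this being the only point where the statistical hypothesis is genuinely used.
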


\subsection{Statistical submanifolds}

In this section, we will give some basic notations, formulas, definitions
taken from reference \cite{VOS}.

Let $(M^{n},g)$ be a statistical submanifold of $(\tilde{M}^{n+d},<,>).$Then
The Gauss and Weingarten formulas are given respectively by%
\begin{equation}
\tilde{\nabla}_{X}Y=\nabla _{X}Y+h(X,Y),\  \  \tilde{\nabla}_{X}\xi =-A_{\xi
}X+D_{X}\xi ,,  \label{GAUSS1*}
\end{equation}%
\begin{equation}
\tilde{\nabla}_{X}^{\ast }Y=\nabla _{X}^{\ast }Y+h^{\ast }(X,Y),\text{ \  \ }%
\tilde{\nabla}_{X}^{\ast }\xi =-A_{\xi }^{\ast }X+D_{X}^{\ast }\xi ,,
\label{GAUSS2}
\end{equation}%
for $X,Y\in \Gamma (TM)$ and $\xi \in $ $\Gamma (T^{^{\bot }}M)$,
respectively. Furthermore, the following hold for $X,Y,Z\in \Gamma (TM)$ and 
$\xi ,\eta \in $ $\Gamma (T^{^{\bot }}M):$%
\begin{equation*}
Xg(Y,Z)=g(\nabla _{X}Y,Z)+g(Y,\nabla _{X}^{\ast }Z),
\end{equation*}%
\begin{equation*}
<h(X,Y),\xi >=g(A_{\xi }^{\ast }X,Y),\text{ \  \ }<h^{\ast }(X,Y),\xi
>=g(A_{\xi }X,Y)
\end{equation*}%
and 
\begin{equation*}
X<\xi ,\eta >=<\text{\  \ }D_{X}\xi ,\eta >+<\xi ,D_{X}^{\ast }\eta >.
\end{equation*}%
The mean curvature vector fields of $M$ are defined with respect to $\tilde{%
\nabla}$ and $\tilde{\nabla}^{\ast }$ by%
\begin{equation*}
H=\frac{1}{n}\sum \limits_{i=1}^{n}h(e_{i},e_{i})\text{ \ and \  \ }H^{\ast }=%
\frac{1}{n}\sum \limits_{i=1}^{n}h^{\ast }(e_{i},e_{i})\text{\ }
\end{equation*}%
where $\{e_{1},...,e_{n}\}$ is a local orthonormal frame of the tangent
bundle $TM$ of $M$. By (\ref{GAUSS1*}) and (\ref{GAUSS2}), we have $%
2h^{0}=h+h^{\ast }$ and $2H^{0}=H+H^{\ast },$ where $h^{0}$ and $H^{0}$ are
second fundamental form and mean curvature with respect to Levi-Civita
connection $\tilde{\nabla}^{0}$.

\begin{proposition}[\protect \cite{VOS}]
\label{Gauss} Let $(M^{m},g$,$\nabla ,\nabla ^{\ast })$ be statistical
submanifold of $(\tilde{M}^{m+n},<,>,\tilde{\nabla},\tilde{\nabla}^{\ast })$%
. Denote $\tilde{R}$ and $\tilde{R}^{\ast }$ the curvature tensors on $%
\tilde{M}^{m+n}$ with respect to connections $\tilde{\nabla}$ and $\tilde{%
\nabla}^{\ast }$. Then%
\begin{equation}
<\tilde{R}(X,Y)Z,W>=g_{M}(R(X,Y)Z,W)+<h(X,Z),h^{\ast }(Y,W)>-<h^{\ast
}(X,W),h(Y,Z)>,  \label{GAUSSEQU}
\end{equation}

\begin{equation}
<\tilde{R}^{\ast }(X,Y)Z,W>=g_{M}(R^{\ast }(X,Y)Z,W)+<h^{\ast
}(X,Z),h(Y,W)>-<h(X,W),h^{\ast }(Y,Z)>,  \label{DGAUSSEQU}
\end{equation}
\end{proposition}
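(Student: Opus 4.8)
The plan is to start from the definition of the curvature tensor of $\tilde{\nabla}$ on $\tilde{M}$,
\[
\tilde{R}(X,Y)Z=\tilde{\nabla}_{X}\tilde{\nabla}_{Y}Z-\tilde{\nabla}_{Y}\tilde{\nabla}_{X}Z-\tilde{\nabla}_{[X,Y]}Z,
\]
and to substitute the Gauss and Weingarten formulas (\ref{GAUSS1*}) repeatedly, then pair the outcome with a tangent vector $W\in\Gamma(TM)$. Because $W$ is tangent, every purely normal term drops out of the inner product, so only the tangential components of the iterated derivatives survive; this is what reduces the computation to a manageable bookkeeping exercise.

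First I would expand $\tilde{\nabla}_{X}\tilde{\nabla}_{Y}Z$. Writing $\tilde{\nabla}_{Y}Z=\nabla_{Y}Z+h(Y,Z)$ and applying $\tilde{\nabla}_{X}$ produces $\nabla_{X}\nabla_{Y}Z+h(X,\nabla_{Y}Z)$ from the tangential part together with $-A_{h(Y,Z)}X+D_{X}h(Y,Z)$ from the Weingarten formula applied to the normal field $h(Y,Z)$. Taking $\langle\,\cdot\,,W\rangle$ and discarding the normal pieces $h(X,\nabla_{Y}Z)$ and $D_{X}h(Y,Z)$ leaves $g(\nabla_{X}\nabla_{Y}Z,W)-g(A_{h(Y,Z)}X,W)$. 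The analogous computation handles $\tilde{\nabla}_{Y}\tilde{\nabla}_{X}Z$, while $\langle\tilde{\nabla}_{[X,Y]}Z,W\rangle=g(\nabla_{[X,Y]}Z,W)$ since the remaining term $h([X,Y],Z)$ is normal.

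The decisive step, and the one I would treat most carefully, is converting $g(A_{h(Y,Z)}X,W)$ into a second-fundamental-form expression via the correct duality relation. Since the Weingarten formula for $\tilde{\nabla}$ carries the shape operator $A_{\xi}$, and the identity linking $A_{\xi}$ to the fundamental forms is $\langle h^{\ast}(X,W),\xi\rangle=g(A_{\xi}X,W)$ rather than the companion one involving $h$, one gets $g(A_{h(Y,Z)}X,W)=\langle h^{\ast}(X,W),h(Y,Z)\rangle$. Confusing $A_{\xi}$ with $A^{\ast}_{\xi}$ at this point is the only genuine trap; everything else is routine. Assembling the three contributions and recognizing $g(\nabla_{X}\nabla_{Y}Z-\nabla_{Y}\nabla_{X}Z-\nabla_{[X,Y]}Z,W)=g(R(X,Y)Z,W)$, together with the symmetry $\langle h^{\ast}(Y,W),h(X,Z)\rangle=\langle h(X,Z),h^{\ast}(Y,W)\rangle$, yields (\ref{GAUSSEQU}).

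Finally, the dual identity (\ref{DGAUSSEQU}) follows by running the same argument with the starred objects, replacing $\tilde{\nabla},\nabla,h,A$ by $\tilde{\nabla}^{\ast},\nabla^{\ast},h^{\ast},A^{\ast}$ throughout and invoking the companion relation $\langle h(X,W),\xi\rangle=g(A^{\ast}_{\xi}X,W)$. No new idea is needed, so I would state it as following by a symmetric computation.
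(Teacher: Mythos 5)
Your proposal is correct. The paper does not actually prove this proposition---it is quoted from \cite{VOS} with no argument supplied---but your computation is precisely the standard derivation given there: expand $\tilde{R}(X,Y)Z$ via the Gauss and Weingarten formulas, kill the normal terms against the tangent vector $W$, and convert $g(A_{h(Y,Z)}X,W)$ using the duality $\langle h^{\ast}(X,W),\xi\rangle=g(A_{\xi}X,W)$. You correctly identified the one place where the statistical (as opposed to Riemannian) setting matters, namely that the shape operator $A_{\xi}$ appearing in the Weingarten formula for $\tilde{\nabla}$ pairs with $h^{\ast}$ and not with $h$; this is exactly what produces the mixed terms $\langle h(X,Z),h^{\ast}(Y,W)\rangle-\langle h^{\ast}(X,W),h(Y,Z)\rangle$ in (\ref{GAUSSEQU}), and the starred computation for (\ref{DGAUSSEQU}) is the verbatim dual.
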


\begin{equation}
<(R^{\bot }(X,Y)\xi ,\eta >=<\tilde{R}(X,Y)\xi ,\eta >+g_{M}([A_{\xi }^{\ast
},A_{\eta }]X,Y),  \label{RICCI}
\end{equation}%
\begin{equation}
<(R^{\ast \bot }(X,Y)\xi ,\eta >=<\tilde{R}^{\ast }(X,Y)\xi ,\eta
>+g_{M}([A_{\xi },A_{\eta }^{\ast }]X,Y),  \label{DRICCI}
\end{equation}

\textit{where }$R^{\bot }$\textit{\ and }$R^{\ast \bot }$\textit{\ are
curvature tensors with respect to }$D$\textit{\ and }$D^{\ast }$ and%
\begin{eqnarray*}
\lbrack A_{\xi },A_{\eta }^{\ast }] &=&A_{\xi }A_{\eta }^{\ast }-A_{\eta
}^{\ast }A_{\xi }, \\
\lbrack A_{\xi }^{\ast },A_{\eta }] &=&A_{\xi }^{\ast }A_{\eta }-A_{\eta
}A_{\xi }^{\ast }
\end{eqnarray*}%
\textit{for }$X,Y,Z,W\in \Gamma (TM)$\textit{\ and }$\xi ,\eta \in \Gamma
(T^{\bot }M)$.

\section{Almost Kenmotsu and cosymplectic statistical manifolds}

\begin{definition}[\protect \cite{FU}]
\label{FU}Let ($M,g,\nabla )$ be statistical manifold with almost complex
structure $J\in \Gamma (TM^{(1,1)})$. Denote by $\Omega $ the fundamental
form with respect to $J$ and $g$, that is, $\Omega (X,Y)=g(X,JY)$. The
triplet ($\nabla $,$g,J)$ is called a holomorphic statistical structure on M
if $\Omega $ is a $\nabla $-parallel 2-form.
\end{definition}

\begin{lemma}[\protect \cite{YAZLA}]
\label{FUNDAMENTAL FORM 2}For an almost Hermitian statistical manifold we
have%
\begin{equation}
(\nabla _{X}\Omega )(Y,Z)=g((\nabla _{X}J)Y,Z)-2g(\mathcal{K}_{X}JY,Z),
\label{AZIZ4}
\end{equation}%
and%
\begin{equation}
(\nabla _{X}^{\ast }\Omega )(Y,Z)=g((\nabla _{X}^{\ast }J)Y,Z)+2g(\mathcal{K}%
_{X}JY,Z)  \label{AZIZ5}
\end{equation}%
for any $X,Y,Z\in \Gamma (TM).$
\end{lemma}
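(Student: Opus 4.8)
The plan is to compute $(\nabla_X\Omega)(Y,Z)$ directly from the definition of the covariant derivative of a $2$-form and then to trade the extra connection terms for the difference tensor $\mathcal{K}$ using (\ref{K4}) and the symmetry (\ref{K2}). Writing the fundamental form as $\Omega(Y,Z)=g(JY,Z)$ as in the almost Hermitian preliminaries (so that $J$ sits in the first slot), I start from
\begin{equation*}
(\nabla_X\Omega)(Y,Z)=X\bigl(g(JY,Z)\bigr)-g(J\nabla_XY,Z)-g(JY,\nabla_XZ).
\end{equation*}
The only analytic input is the defining relation (\ref{STATISTIC}) for the dual connection, applied to the pair $(JY,Z)$, which gives $X(g(JY,Z))=g(\nabla_X(JY),Z)+g(JY,\nabla_X^{\ast}Z)$. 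Substituting this produces two natural groupings.

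First I would collect the terms in the first slot: $g(\nabla_X(JY),Z)-g(J\nabla_XY,Z)=g((\nabla_XJ)Y,Z)$, which is exactly the leading term of (\ref{AZIZ4}). The remaining terms combine in the second slot as $g(JY,\nabla_X^{\ast}Z-\nabla_XZ)$. By (\ref{K4}) one has $\nabla_X^{\ast}Z-\nabla_XZ=-2\mathcal{K}_XZ$, so this contributes $-2g(JY,\mathcal{K}_XZ)$. Finally, the $g$-self-adjointness of $\mathcal{K}$ recorded in (\ref{K2}) lets me move $\mathcal{K}$ across the inner product, $g(JY,\mathcal{K}_XZ)=g(\mathcal{K}_XJY,Z)$, yielding $(\nabla_X\Omega)(Y,Z)=g((\nabla_XJ)Y,Z)-2g(\mathcal{K}_XJY,Z)$, which is (\ref{AZIZ4}).

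The dual identity (\ref{AZIZ5}) follows by running the same computation with $\nabla$ replaced by $\nabla^{\ast}$ throughout. Here I would use (\ref{STATISTIC}) in the form $X(g(JY,Z))=g(\nabla_X^{\ast}(JY),Z)+g(JY,\nabla_XZ)$, which is legitimate because $(\nabla^{\ast})^{\ast}=\nabla$. The only change is the sign: now $\nabla_XZ-\nabla_X^{\ast}Z=+2\mathcal{K}_XZ$ by (\ref{K4}), so the second-slot contribution becomes $+2g(\mathcal{K}_XJY,Z)$, giving (\ref{AZIZ5}).

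Since each step is an application of an identity already available in the excerpt, I do not expect a genuine obstacle; the one place demanding care is the bookkeeping of signs and of which connection sits in which slot when (\ref{STATISTIC}) is invoked, together with keeping $J$ in the first argument of $\Omega$ so that the $(\nabla_XJ)Y$ term emerges on the left slot rather than the right. Using the duality $(\nabla^{\ast})^{\ast}=\nabla$ correctly is precisely what makes the two formulas mirror one another with opposite signs.
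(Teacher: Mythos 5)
Your computation is correct: expanding $(\nabla_X\Omega)(Y,Z)$ with $\Omega(Y,Z)=g(JY,Z)$, applying (\ref{STATISTIC}) to the pair $(JY,Z)$, and converting $\nabla_X^{\ast}Z-\nabla_XZ=-2\mathcal{K}_XZ$ via (\ref{K4}) together with the self-adjointness (\ref{K2}) is exactly the standard derivation, and since the paper states the lemma without proof (it is quoted from \cite{YAZLA}) there is nothing for your argument to diverge from. The one point worth flagging is the sign convention: you correctly used the Section~2 definition $\Omega(Y,Z)=g(JY,Z)$ rather than the $\Omega(X,Y)=g(X,JY)$ appearing in Definition~\ref{FU} (the two differ by an overall sign because $g(JX,Y)=-g(X,JY)$), and only the former is compatible with the signs in (\ref{AZIZ4}) and (\ref{AZIZ5}).
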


\begin{corollary}[\protect \cite{YAZLA}]
\label{FUNDAMENTAL FORM ULUDAG}For an almost Hermitian statistical manifold
we have 
\begin{equation}
(\nabla _{X}\Omega )(Y,Z)=(\nabla _{X}^{0}\Omega )(Y,Z)-g(\mathcal{K}_{X}JY+J%
\mathcal{K}_{X}Y,Z)  \label{AZIZ5A}
\end{equation}%
and%
\begin{equation}
(\nabla _{X}^{\ast }\Omega )(Y,Z)=(\nabla _{X}^{0}\Omega )(Y,Z)+g(\mathcal{K}%
_{X}JY+J\mathcal{K}_{X}Y,Z)  \label{AZIZ5B}
\end{equation}%
for any $X,Y,Z\in \Gamma (TM).$
\end{corollary}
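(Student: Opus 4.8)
The plan is to feed the Levi-Civita decompositions of $\nabla$ and $\nabla^{\ast}$ directly into the two identities of Lemma \ref{FUNDAMENTAL FORM 2}. First I would record how the almost complex structure differentiates under the three connections. Since $\nabla_{X}Y=\nabla_{X}^{0}Y+\mathcal{K}_{X}Y$ by (\ref{K1}), expanding $(\nabla_{X}J)Y=\nabla_{X}(JY)-J\nabla_{X}Y$ and grouping the Levi-Civita part gives
\begin{equation*}
(\nabla_{X}J)Y=(\nabla_{X}^{0}J)Y+\mathcal{K}_{X}JY-J\mathcal{K}_{X}Y,
\end{equation*}
while the analogous computation with $\nabla_{X}^{\ast}Y=\nabla_{X}^{0}Y-\mathcal{K}_{X}Y$, read off from (\ref{K3}), yields
\begin{equation*}
(\nabla_{X}^{\ast}J)Y=(\nabla_{X}^{0}J)Y-\mathcal{K}_{X}JY+J\mathcal{K}_{X}Y.
\end{equation*}

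Next I would secure the bridge to the Levi-Civita fundamental form. Applying Lemma \ref{FUNDAMENTAL FORM 2} to the trivial statistical structure $\nabla=\nabla^{0}$, for which $\mathcal{K}=0$, collapses (\ref{AZIZ4}) to the identification $(\nabla_{X}^{0}\Omega)(Y,Z)=g((\nabla_{X}^{0}J)Y,Z)$ (the same equality also follows from a one-line metric-compatibility computation). This step is what lets me sidestep any ambiguity in the sign convention for $\Omega$: whichever convention makes Lemma \ref{FUNDAMENTAL FORM 2} hold is automatically the one forced here.

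The remainder is pure substitution. Inserting the expansion of $(\nabla_{X}J)Y$ into (\ref{AZIZ4}) gives
\begin{equation*}
(\nabla_{X}\Omega)(Y,Z)=g((\nabla_{X}^{0}J)Y,Z)+g(\mathcal{K}_{X}JY,Z)-g(J\mathcal{K}_{X}Y,Z)-2g(\mathcal{K}_{X}JY,Z),
\end{equation*}
and the two $\mathcal{K}_{X}JY$ terms combine, leaving exactly $(\nabla_{X}^{0}\Omega)(Y,Z)-g(\mathcal{K}_{X}JY+J\mathcal{K}_{X}Y,Z)$, which is (\ref{AZIZ5A}). The dual identity (\ref{AZIZ5B}) follows in the same way by substituting the expansion of $(\nabla_{X}^{\ast}J)Y$ into (\ref{AZIZ5}); there the sign on each $\mathcal{K}$ term is reversed, and the $+2g(\mathcal{K}_{X}JY,Z)$ of (\ref{AZIZ5}) now produces the $+$ sign in front of $g(\mathcal{K}_{X}JY+J\mathcal{K}_{X}Y,Z)$.

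I expect no genuine obstacle: the entire content is the elementary expansion of $(\nabla_{X}J)Y$ and $(\nabla_{X}^{\ast}J)Y$ through the difference tensor $\mathcal{K}$. The only place demanding care is the bookkeeping of signs between the $\nabla$ and $\nabla^{\ast}$ cases, and this is handled uniformly once the trivial-structure identification $(\nabla_{X}^{0}\Omega)(Y,Z)=g((\nabla_{X}^{0}J)Y,Z)$ is in place.
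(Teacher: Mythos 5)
Your argument is correct and is exactly the intended derivation: the paper states this as an immediate corollary of Lemma \ref{FUNDAMENTAL FORM 2} (citing \cite{YAZLA} without written proof), and the route via the expansions $(\nabla_{X}J)Y=(\nabla_{X}^{0}J)Y+\mathcal{K}_{X}JY-J\mathcal{K}_{X}Y$ and $(\nabla_{X}^{\ast}J)Y=(\nabla_{X}^{0}J)Y-\mathcal{K}_{X}JY+J\mathcal{K}_{X}Y$, substituted into (\ref{AZIZ4}) and (\ref{AZIZ5}), is the natural one. Your device of pinning down $(\nabla_{X}^{0}\Omega)(Y,Z)=g((\nabla_{X}^{0}J)Y,Z)$ by specializing the Lemma to $\mathcal{K}=0$ also neatly resolves the paper's inconsistent sign conventions for $\Omega$ (compare the Preliminaries with Definition \ref{FU}).
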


By Lemma \ref{FUNDAMENTAL FORM 2} and Corollary \ref{FUNDAMENTAL FORM ULUDAG}
we have following.

\begin{proposition}[\protect \cite{FU},\protect \cite{YAZLA}.]
Let ($M,g,\nabla ,J)$ be a holomorphic stattistical manifold and $\mathcal{K}%
_{X}JY+J\mathcal{K}_{X}Y=0$ for any $X,Y\in \Gamma (TM).$Then following
staments are equiavalent.

\begin{itemize}
\item $(M,g,\nabla ,J)$ is a holomorphic stattistical manifold,

\item $(M,g,\nabla ^{\ast },J)$ is a holomorphic stattistical manifold,

\item $(M,g,\nabla ^{0},J)$ is a Kaehler manifold.
\end{itemize}
\end{proposition}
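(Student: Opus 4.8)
The plan is to establish the equivalence by showing that under the hypothesis $\mathcal{K}_{X}JY+J\mathcal{K}_{X}Y=0$, the three statements all reduce to the same analytic condition, namely the vanishing of the relevant covariant derivative of $\Omega$. Recall from Definition \ref{FU} that $(M,g,\nabla,J)$ being holomorphic statistical means precisely that $\Omega$ is $\nabla$-parallel, i.e. $(\nabla_{X}\Omega)(Y,Z)=0$ for all $X,Y,Z$; analogously $(M,g,\nabla^{\ast},J)$ is holomorphic statistical iff $(\nabla^{\ast}_{X}\Omega)(Y,Z)=0$, and $(M,g,\nabla^{0},J)$ being Kaehler is equivalent to $(\nabla^{0}_{X}\Omega)(Y,Z)=0$ (since $\nabla^{0}$ is the Levi-Civita connection and $\Omega$ is the fundamental form).

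The key observation is that the correction terms in Corollary \ref{FUNDAMENTAL FORM ULUDAG} collapse under the hypothesis. First I would substitute $\mathcal{K}_{X}JY+J\mathcal{K}_{X}Y=0$ directly into \eqref{AZIZ5A} and \eqref{AZIZ5B}. The term $g(\mathcal{K}_{X}JY+J\mathcal{K}_{X}Y,Z)$ is exactly the combination assumed to vanish, so both identities immediately simplify to
\begin{equation*}
(\nabla_{X}\Omega)(Y,Z)=(\nabla^{0}_{X}\Omega)(Y,Z)=(\nabla^{\ast}_{X}\Omega)(Y,Z).
\end{equation*}
Thus all three $2$-form covariant derivatives coincide identically. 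From this single chain of equalities, the vanishing of any one of the three expressions forces the vanishing of the other two, which is precisely the desired cyclic equivalence among the three statements.

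The proof is therefore essentially a one-line reduction once the correct reformulation of each condition is in place; there is no genuine obstacle in the computation itself. The main point requiring care is the translation step: one must be careful to phrase each of the three geometric statements as the parallelism condition for $\Omega$ with respect to the appropriate connection, invoking Definition \ref{FU} for the first two and the standard Kaehler characterization (via $\nabla^{0}\Omega=0$) for the third, and to verify that the hypothesis on $\mathcal{K}$ is symmetric enough to apply uniformly to both \eqref{AZIZ5A} and \eqref{AZIZ5B}. Once that bookkeeping is done, the equivalence follows immediately by transitivity of equality.
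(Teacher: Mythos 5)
Your argument is correct and is essentially the paper's own: the paper derives this proposition immediately from Corollary \ref{FUNDAMENTAL FORM ULUDAG}, whose correction terms $g(\mathcal{K}_{X}JY+J\mathcal{K}_{X}Y,Z)$ in (\ref{AZIZ5A}) and (\ref{AZIZ5B}) vanish under the hypothesis, so that $(\nabla_{X}\Omega)(Y,Z)=(\nabla_{X}^{0}\Omega)(Y,Z)=(\nabla_{X}^{\ast}\Omega)(Y,Z)$ and the three parallelism conditions coincide. Your explicit translation of the Kaehler condition into $\nabla^{0}\Omega=0$ (equivalently $\nabla^{0}J=0$, via metric compatibility of the Levi-Civita connection) is the only step the paper leaves implicit.
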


\begin{definition}
Let ($M^{2n+1},g,\nabla ,\nabla ^{\ast })$ be a statistical manifold. If $%
M^{2n+1}$ $\ $is an almost contact metric manifold then $M^{2n+1}$ is called
almost contact metric statistical manifold.
\end{definition}

\begin{corollary}[\protect \cite{YAZLA}]
For an almost contact metric statistical manifold we have 
\begin{equation}
(\nabla _{X}\Phi )(Y,Z)=(\nabla _{X}^{0}\Phi )(Y,Z)-g(\mathcal{K}_{X}\phi
Y+\phi \mathcal{K}_{X}Y,Z)  \label{BB1}
\end{equation}%
and%
\begin{equation}
(\nabla _{X}^{\ast }\Phi )(Y,Z)=(\nabla _{X}^{0}\Phi )(Y,Z)+g(\mathcal{K}%
_{X}\phi Y+\phi \mathcal{K}_{X}Y,Z)  \label{BB2}
\end{equation}%
for any $X,Y,Z\in \Gamma (TM).$
\end{corollary}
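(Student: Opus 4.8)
The plan is to derive the two formulas \eqref{BB1} and \eqref{BB2} by specializing the already-established Hermitian identities \eqref{AZIZ5A} and \eqref{AZIZ5B} from Corollary~\ref{FUNDAMENTAL FORM ULUDAG} to the almost contact setting. The fundamental $2$-form $\Phi$ of an almost contact metric manifold, defined by $\Phi(X,Y)=g(\phi X,Y)$ in \eqref{3}, plays exactly the role that the Hermitian fundamental form $\Omega$ plays, with the $(1,1)$-tensor $\phi$ taking the place of the almost complex structure $J$. Since the computations leading to \eqref{AZIZ5A} and \eqref{AZIZ5B} only used the algebraic relations $g(\mathcal{K}_X Y,Z)=g(Y,\mathcal{K}_X Z)$ from \eqref{K2} together with the defining relation between $\nabla$, $\nabla^0$, and $\mathcal{K}$ in \eqref{K1}, and did \emph{not} use the property $J^2=-\mathrm{Id}$, the same derivation goes through verbatim with $\phi$ in place of $J$ and $\Phi$ in place of $\Omega$.

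Let me describe the steps concretely. First I would recall that for any tensor the covariant derivative of $\Phi$ with respect to $\nabla$ satisfies
\begin{equation*}
(\nabla_X \Phi)(Y,Z)=X\Phi(Y,Z)-\Phi(\nabla_X Y,Z)-\Phi(Y,\nabla_X Z),
\end{equation*}
and the analogous identity for $\nabla^0$. Next I would substitute $\nabla_X Y=\nabla^0_X Y+\mathcal{K}_X Y$ from \eqref{K1} into the expression for $(\nabla_X \Phi)(Y,Z)$ and compare term by term with $(\nabla^0_X \Phi)(Y,Z)$. Expanding $\Phi(\mathcal{K}_X Y,Z)=g(\phi\,\mathcal{K}_X Y,Z)$ and $\Phi(Y,\mathcal{K}_X Z)=g(\phi Y,\mathcal{K}_X Z)$, and then using the self-adjointness of $\mathcal{K}_X$ from \eqref{K2} to rewrite $g(\phi Y,\mathcal{K}_X Z)=g(\mathcal{K}_X \phi Y,Z)$, the two correction terms combine into $-g(\mathcal{K}_X\phi Y+\phi\,\mathcal{K}_X Y,Z)$, which is precisely \eqref{BB1}. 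The dual formula \eqref{BB2} follows identically after replacing $\nabla$ by $\nabla^\ast$ and using \eqref{K3}, which flips the sign of the $\mathcal{K}$-term.

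\emph{The main subtlety}, rather than a genuine obstacle, is bookkeeping: one must check that no hidden use of $J^2=-\mathrm{Id}$ or of $\eta\otimes\xi$ appears in the Hermitian proof, so that the purely algebraic manipulation with the self-adjoint difference tensor $\mathcal{K}$ transfers cleanly. Since the identities in Lemma~\ref{FUNDAMENTAL FORM 2} and Corollary~\ref{FUNDAMENTAL FORM ULUDAG} are already written in a form that isolates the $\mathcal{K}_X\phi Y+\phi\,\mathcal{K}_X Y$ combination and never invokes $\phi^2$, the transfer is immediate. Thus the cleanest route is simply to invoke Corollary~\ref{FUNDAMENTAL FORM ULUDAG} with the formal substitutions $\Omega\rightsquigarrow\Phi$ and $J\rightsquigarrow\phi$, and note that every step remains valid because the almost contact compatibility \eqref{1} and the relation \eqref{2} are never needed in the derivation of the correction terms.
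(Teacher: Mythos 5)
Your derivation is correct and is essentially the argument behind the statement: expanding $(\nabla_X\Phi)(Y,Z)$ with $\nabla_XY=\nabla^0_XY+\mathcal{K}_XY$ from \eqref{K1}, using $\Phi(X,Y)=g(\phi X,Y)$ from \eqref{3} and the self-adjointness of $\mathcal{K}_X$ from \eqref{K2} yields \eqref{BB1}, and \eqref{K3} flips the sign to give \eqref{BB2}, exactly as in the Hermitian case of Corollary~\ref{FUNDAMENTAL FORM ULUDAG}. The paper itself only cites \cite{YAZLA} for this corollary, and your observation that no use of $\phi^2=-\mathrm{Id}+\eta\otimes\xi$ is needed is accurate, so the transfer from $J$ to $\phi$ is indeed verbatim.
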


\begin{proposition}[\protect \cite{YAZLA}]
\label{FUNDAMENTAL FORM 86}Let $(M^{n},g,\nabla ,\nabla ^{\ast })$ be a
statistical manifold and $\psi $ be a skew symmetric $(1,1)$ tensor field on 
$M$. \ Then we have%
\begin{equation}
g(\mathcal{K}_{X}\psi Y+\psi \mathcal{K}_{X}Y,Z)+g(\mathcal{K}_{Z}\psi
X+\psi \mathcal{K}_{Z}X,Y)+g(\mathcal{K}_{Y}\psi Z+\psi \mathcal{K}_{Y}Z,X)=0
\label{AZIZ IDENTITIY}
\end{equation}%
for any $X,Y,Z\in \Gamma (TM).$
\end{proposition}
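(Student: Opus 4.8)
The plan is to reduce the entire identity to the single observation that the trilinear form $C(X,Y,Z):=g(\mathcal{K}_{X}Y,Z)$ is totally symmetric, and then to use the skew-symmetry of $\psi$ to cancel the six resulting terms in pairs.

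First I would establish total symmetry of $C$. The transposition $C(X,Y,Z)=C(Y,X,Z)$ is immediate from the symmetry $\mathcal{K}_{X}Y=\mathcal{K}_{Y}X$ recorded in (\ref{K2}), while the transposition $C(X,Y,Z)=C(X,Z,Y)$ follows from the self-adjointness $g(\mathcal{K}_{X}Y,Z)=g(Y,\mathcal{K}_{X}Z)$ in (\ref{K2}) together with the symmetry of $g$. Since these two transpositions generate all of $S_{3}$, the form $C$ is invariant under every permutation of its arguments.

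Next I would expand the left-hand side into its six scalar summands and normalize each one so that $\psi$ is applied to exactly one of $X,Y,Z$ occurring inside $C$. The three summands in which $\psi$ is composed \emph{after} $\mathcal{K}$ are already in this shape, for instance $g(\mathcal{K}_{X}\psi Y,Z)=C(X,\psi Y,Z)$. For the three summands in which $\psi$ is composed \emph{before} $\mathcal{K}$, I would apply the skew-symmetry $g(\psi A,B)=-g(A,\psi B)$ to shift $\psi$ off the $\mathcal{K}$ factor and onto the remaining argument, for instance $g(\psi \mathcal{K}_{X}Y,Z)=-g(\mathcal{K}_{X}Y,\psi Z)=-C(X,Y,\psi Z)$. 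This rewriting is what produces the crucial minus signs.

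Finally I would group the six terms according to whether $\psi$ lands on $X$, on $Y$, or on $Z$. Within each group the two members become equal values of $C$ once total symmetry is used to reorder the arguments (for example $C(X,\psi Y,Z)=C(Z,X,\psi Y)$), but they carry opposite signs because precisely one partner arose from a skew-symmetry move. Hence each of the three groups vanishes on its own, and the asserted identity follows. The only point requiring care is the sign bookkeeping: one must verify that in every pair exactly one term inherits the minus sign, so that total symmetry of $C$ forces cancellation rather than reinforcement. There is no analytic content here — the statement is purely algebraic, flowing entirely from the symmetries of $\mathcal{K}$ in (\ref{K2}) and the skew-symmetry of $\psi$.
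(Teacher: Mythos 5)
Your argument is correct: the total symmetry of the cubic form $g(\mathcal{K}_{X}Y,Z)$ follows exactly as you say from (\ref{K2}), and pairing each $C(\cdot,\psi\cdot,\cdot)$ term with the negated term produced by moving $\psi$ across $g$ cancels the six summands in three pairs. The paper itself states Proposition \ref{FUNDAMENTAL FORM 86} without proof (importing it from \cite{YAZLA}), but your derivation is the standard one for this identity and there is nothing to add.
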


If we resort to the relation (\ref{BB1}) and\ (\ref{AZIZ IDENTITIY}), we have%
\begin{eqnarray*}
(\nabla _{X}\Phi )(Y,Z)+(\nabla _{Z}\Phi )(X,Y)+(\nabla _{Y}\Phi )(Z,X)
&=&(\nabla _{X}^{0}\Phi )(Y,Z)+(\nabla _{Z}^{0}\Phi )(X,Y) \\
&&+(\nabla _{Y}^{0}\Phi )(Z,X)-2(g(\mathcal{K}_{X}\phi Y+\phi \mathcal{K}%
_{X}Y,Z) \\
&&+g(\mathcal{K}_{Z}\phi X+\phi \mathcal{K}_{Z}X,Y)+g(\mathcal{K}_{Y}\phi
Z+\phi \mathcal{K}_{Y}Z,X) \\
&=&(\nabla _{X}^{0}\Phi )(Y,Z)+(\nabla _{Z}^{0}\Phi )(X,Y)+(\nabla
_{Y}^{0}\Phi )(Z,X),
\end{eqnarray*}%
where $U,V,W\in \Gamma (TM).$

This relation shows clearly that 
\begin{eqnarray}
3d\Phi (X,Y,Z) &=&(\nabla _{X}^{0}\Phi )(Y,Z)+(\nabla _{Z}^{0}\Phi
)(X,Y)+(\nabla _{Y}^{0}\Phi )(Z,X)  \label{CONTACT5} \\
&=&(\nabla _{X}\Phi )(Y,Z)+(\nabla _{Z}\Phi )(X,Y)+(\nabla _{Y}\Phi )(Z,X). 
\notag
\end{eqnarray}

Let $(N,\nabla ,g,J)$ be an almost Hermitian manifold statistical manifold
and $(%
\mathbb{R}
,dt,^{%
\mathbb{R}
}\nabla )$ be trival statistical manifold. Let us consider the warped
product $\tilde{M}=%
\mathbb{R}
\times _{f}N$,with warping function $f>0,$endowed with the Riemannian metric

\begin{equation*}
<,>=dt^{2}+f^{2}g
\end{equation*}%
Denoting by $\xi =\frac{\partial }{\partial t}$ the structure vector field
on $\tilde{M}$, one can define arbitary any vector field on $\tilde{M}$ by $%
\tilde{X}=\eta (\tilde{X})\xi +X,$ where $X$ is any vector field on $N$ and $%
dt=\eta $. By the help of \ tensor field $J,$ a new tensor field $\phi $ of
type $(1,1)$ on $\tilde{M}$ can be given by 
\begin{equation}
\phi \tilde{X}=JX,\text{ \ }X\in \Gamma (TN),  \label{CONTACT3}
\end{equation}%
for $\tilde{X}\in $ $\Gamma (T\tilde{M})$. So we get $\phi \xi =0$, $\eta
\circ \phi =0,$ $\phi ^{2}\tilde{X}=-\tilde{X}+\eta (\tilde{X})\xi $ and $%
<\phi \tilde{X},\tilde{Y}>=-<\tilde{X},\phi \tilde{Y}>$ for $\tilde{X},%
\tilde{Y}\in \Gamma (T\tilde{M})$. Furthermore, we have $<\phi \tilde{X}%
,\phi \tilde{Y}>=<\tilde{X},\tilde{Y}>-\eta (\tilde{X})\eta (\tilde{Y})$
.Thus $(\tilde{M},<,>,\phi ,\xi ,\eta )$ is an almost contact metric
manifold. By Proposition \ref{TO1} and similar argument as in \cite{CA} we
have

\begin{equation}
(\tilde{\nabla}_{\tilde{X}}\phi )\tilde{Y}=(\nabla _{X}J)Y-\frac{f^{\prime
}(t)}{f(t)}<\tilde{X},\phi \tilde{Y}>\xi -\frac{f^{\prime }(t)}{f(t)}\eta (%
\tilde{Y})\phi \tilde{X}  \label{CONTACT4}
\end{equation}

Using Proposition \ref{TO1} \ we get 
\begin{equation*}
\mathcal{\tilde{K}}_{X}Y=\mathcal{K}_{X}Y,\mathcal{\tilde{K}}_{X}\xi =%
\mathcal{\tilde{K}}_{\xi }X=0,\mathcal{\tilde{K}}_{\xi }\xi =0,
\end{equation*}%
where $\mathcal{K}_{X}Y=\nabla _{X}Y-\nabla _{X}^{0}Y$ and $\mathcal{\tilde{K%
}}_{X}Y=\tilde{\nabla}_{X}Y-\tilde{\nabla}_{X}^{0}Y.$

By (\ref{CONTACT4}) and \ (\ref{CONTACT5}) it is readily found that the
relation%
\begin{eqnarray*}
(\tilde{\nabla}_{\tilde{X}}\Phi )(\tilde{Y},\tilde{Z}) &=&f^{2}(\nabla
_{X}\Omega )(Y,Z)-\frac{f^{\prime }(t)}{f(t)}<\tilde{X},\phi \tilde{Y}>\eta (%
\tilde{Z}) \\
&&-\frac{f^{\prime }(t)}{f(t)}\eta (\tilde{Y})\Phi (\tilde{X},\tilde{Z})
\end{eqnarray*}%
We thus conclude that%
\begin{equation}
d\Phi =f^{2}d\Omega +2(-\frac{f^{\prime }(t)}{f(t)})\eta \wedge \Phi
\label{CONTACT6}
\end{equation}%
and Proposition \ref{TO1} leading to the following theorem.

\begin{theorem}
Let $(%
\mathbb{R}
,dt,^{%
\mathbb{R}
}\nabla )$ be trival statistical manifold.\ Then the warped product $\tilde{M%
}=%
\mathbb{R}
\times _{f}N$ is an almost $(-\frac{f^{\prime }(t)}{f(t)})-$Kenmotsu
statistical manifold if and only if $(N,\nabla ,g,J)$ is an almost Kaehler
statistical manifold. Moreover $\mathcal{\tilde{K}}_{X}Y=\mathcal{K}_{X}Y,%
\mathcal{\tilde{K}}_{X}\xi =\mathcal{\tilde{K}}_{\xi }X=0,\mathcal{\tilde{K}}%
_{\xi }\xi =0,$where $\mathcal{K}=\nabla -\nabla ^{g}$, and $\mathcal{\tilde{%
K}=}\tilde{\nabla}-\tilde{\nabla}^{<,>}.$
\end{theorem}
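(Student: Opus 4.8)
The plan is to prove the stated equivalence by directly translating the defining conditions of an almost Kenmotsu structure, namely $d\eta=0$ and $d\Phi = 2\eta\wedge\Phi$ from equation (\ref{5}) (here with the coefficient $-\tfrac{f'(t)}{f(t)}$ in place of the constant), into conditions on the warped product $\tilde M = \mathbb{R}\times_f N$ via the already-established formula (\ref{CONTACT6}). First I would verify the $d\eta=0$ condition: since $\eta = dt$ by construction, we have $d\eta = d(dt)=0$ identically, so this half of the almost Kenmotsu condition holds automatically and imposes no constraint on $N$. The real content lies in the $d\Phi$ condition, which is where formula (\ref{CONTACT6}) does the heavy lifting.

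Next I would examine (\ref{CONTACT6}), which reads $d\Phi = f^2 d\Omega + 2(-\tfrac{f'(t)}{f(t)})\eta\wedge\Phi$. The almost $(-\tfrac{f'(t)}{f(t)})$-Kenmotsu condition demands precisely $d\Phi = 2(-\tfrac{f'(t)}{f(t)})\eta\wedge\Phi$. Comparing the two expressions, these coincide if and only if the term $f^2 d\Omega$ vanishes; since $f>0$, this is equivalent to $d\Omega=0$ on $N$. By the definition recalled in the preliminaries, an almost Hermitian statistical manifold $(N,\nabla,g,J)$ with $d\Omega=0$ (closed fundamental form) is exactly an almost Kaehler statistical manifold. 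This establishes the desired biconditional: $\tilde M$ is almost $(-\tfrac{f'(t)}{f(t)})$-Kenmotsu iff $d\Omega=0$ iff $(N,\nabla,g,J)$ is almost Kaehler statistical. I should be careful to confirm that the statistical almost contact metric structure $(\tilde M,<,>,\phi,\xi,\eta)$ has already been constructed in the text above (it has, via (\ref{CONTACT3}) and the subsequent verifications), so that the notion of ``almost Kenmotsu statistical manifold'' is well-posed here.

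For the final assertion about the difference tensors, I would simply invoke the computation $\mathcal{\tilde K}_X Y = \mathcal{K}_X Y$, $\mathcal{\tilde K}_X\xi = \mathcal{\tilde K}_\xi X = 0$, $\mathcal{\tilde K}_\xi\xi = 0$, which was already derived from Proposition \ref{TO1} in the paragraph preceding the theorem. This follows because the dualistic structure on the warped product is built so that the horizontal/vertical interaction terms (parts (b), (ii) of Proposition \ref{TO1}) are identical for $\bar\nabla$ and $\bar\nabla^*$, forcing the difference tensor to vanish in every slot involving $\xi$, while part (c) versus (iii) shows the purely vertical part reproduces the difference tensor $\mathcal{K}$ on $N$ unchanged.

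I expect the main subtlety, rather than any computational obstacle, to be ensuring that formula (\ref{CONTACT6}) is applied with the correct interpretation of all three arguments of $d\Phi$: one must check that the identity holds for every choice of tangent vectors on $\tilde M$, including mixed combinations of $\xi$ and horizontal lifts, so that the vanishing of $f^2 d\Omega$ genuinely captures the full $3$-form equation and not merely its restriction to horizontal directions. Since (\ref{CONTACT6}) was derived from the tensorial identity for $(\tilde\nabla_{\tilde X}\Phi)(\tilde Y,\tilde Z)$ valid for arbitrary $\tilde X,\tilde Y,\tilde Z$, this check is routine, and the proof reduces to the clean algebraic comparison described above.
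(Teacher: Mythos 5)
Your proposal is correct and follows essentially the same route as the paper, which presents this theorem as an immediate consequence of formula (\ref{CONTACT6}) (so that the almost $(-\frac{f^{\prime }(t)}{f(t)})$-Kenmotsu condition reduces to $f^{2}d\Omega =0$, i.e.\ $d\Omega =0$) together with the difference-tensor identities already derived from Proposition \ref{TO1}. Your added remarks on $d\eta =d(dt)=0$ and on checking (\ref{CONTACT6}) in all slots are sensible refinements of the same argument, not a different approach.
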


Chossing $f=const\neq 0$ we have following corollary.

\begin{corollary}
Let $(%
\mathbb{R}
,dt,^{%
\mathbb{R}
}\nabla )$ be trival statistical manifold. Then the product manifold $\tilde{%
M}=%
\mathbb{R}
\times N$ is an almost cosymplectic statistical manifold if and only if $%
(N,\nabla ,g,J)$ is an almost Kaehler statistical manifold
\end{corollary}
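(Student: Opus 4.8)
The plan is to obtain this corollary as a direct specialization of the preceding theorem to the case of a nonzero constant warping function $f$. The key observation is that when $f$ is constant one has $f'(t)=0$, so the metric $<,>=dt^{2}+f^{2}g$ is, up to the constant conformal factor $f^{2}$ on the second factor, the product metric; hence the warped product $\mathbb{R}\times _{f}N$ reduces to the Riemannian product $\mathbb{R}\times N$, and the almost contact metric structure $(\phi ,\xi ,\eta )$ constructed in (\ref{CONTACT3}) is exactly the product structure. The corollary is thus essentially a matter of substituting $f'\equiv 0$ into the theorem and reconciling the terminology.

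Next I would track what the Kenmotsu parameter $-\frac{f'(t)}{f(t)}$ becomes. Since $f'(t)=0$, this parameter vanishes, so the conclusion of the theorem reads: $\mathbb{R}\times N$ is an almost $0$-Kenmotsu statistical manifold if and only if $(N,\nabla ,g,J)$ is an almost Kaehler statistical manifold. It then suffices to identify an almost $0$-Kenmotsu statistical manifold with an almost cosymplectic statistical manifold. Writing the $\beta $-generalized Kenmotsu condition $d\Phi =2\beta \eta \wedge \Phi $ of (\ref{5}) with $\beta =0$, one recovers precisely the almost cosymplectic condition (\ref{4}), namely $d\eta =0$ and $d\Phi =0$. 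The same matching can be read off directly from formula (\ref{CONTACT6}): setting $f'(t)=0$ gives $d\Phi =f^{2}d\Omega $, and since $f\neq 0$ we conclude $d\Phi =0$ if and only if $d\Omega =0$, while $d\eta =d(dt)=0$ holds automatically. Thus the almost cosymplectic condition on $\tilde{M}$ is equivalent to the almost Kaehler condition $d\Omega =0$ on $N$.

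Finally, for the statistical part I would simply invoke the relations $\mathcal{\tilde{K}}_{X}Y=\mathcal{K}_{X}Y$, $\mathcal{\tilde{K}}_{X}\xi =\mathcal{\tilde{K}}_{\xi }X=0$, and $\mathcal{\tilde{K}}_{\xi }\xi =0$ furnished by the theorem (and originally by Proposition \ref{TO1}); these hold for any $f$, hence in particular for constant $f$, so the dualistic structure on $\mathbb{R}\times N$ is the product of the trivial statistical structure on $\mathbb{R}$ with the statistical structure on $N$. I do not anticipate a genuine obstacle: the whole argument reduces to the substitution $f'\equiv 0$ together with the observation that almost $0$-Kenmotsu coincides with almost cosymplectic. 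The only point requiring a moment of care is confirming that the $\beta $-Kenmotsu terminology of the theorem degenerates correctly to the almost cosymplectic definition of the preliminaries, which is immediate once (\ref{4}) and (\ref{5}) are written side by side.
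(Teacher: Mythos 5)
Your proposal is correct and takes essentially the same route as the paper, which obtains this corollary solely by the remark that one chooses $f=\mathrm{const}\neq 0$ in the preceding theorem. Your explicit substitution $f'\equiv 0$ into $d\Phi=f^{2}d\Omega+2(-\frac{f'(t)}{f(t)})\eta\wedge\Phi$, together with the identification of the almost $0$-Kenmotsu condition with the almost cosymplectic one, is just the intended (and in the paper unstated) justification spelled out.
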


Using same methods as in \cite{LO}, we get following proposition

\begin{proposition}
\label{PRP}Let $\tilde{M}=I\times _{f}N(c)$ be statistical warped product
manifold and $\tilde{X},\tilde{Y},\tilde{Z},\tilde{W}\in \Gamma (\tilde{M}),$
where $I\subset 
\mathbb{R}
$ is trival statistical manifold and $N(c)$ is statistical compleex space
form. Then the curvature tensor $\tilde{R}$ and $\tilde{R}^{\ast }$ are
given by 
\begin{eqnarray*}
\tilde{R}(\tilde{X},\tilde{Y},\tilde{Z},\tilde{W}) &=&\tilde{R}^{\ast }(%
\tilde{X},\tilde{Y},\tilde{Z},\tilde{W})=[\frac{c}{4f^{2}}-\frac{(f^{\prime
})^{2}}{f^{2}}][<\tilde{Y},\tilde{Z}><\tilde{X},\tilde{W}>-<\tilde{X},\tilde{%
Z}><\tilde{Y},\tilde{W}>] \\
+[\frac{c}{4f^{2}}-\frac{(f^{\prime })^{2}}{f^{2}}+\frac{f^{\prime \prime }}{%
f}][ &<&\tilde{X},\tilde{Z}><\tilde{Y},\partial _{t}><\tilde{W},\partial
_{t}> \\
- &<&\tilde{Y},\tilde{Z}><\tilde{X},\partial _{t}><\tilde{W},\partial _{t}>+<%
\tilde{Y},\tilde{W}><\tilde{X},\partial _{t}><\tilde{Z},\partial _{t}> \\
- &<&\tilde{X},\tilde{W}><\tilde{Y},\partial _{t}><\tilde{Z},\partial _{t}>]
\\
+\frac{c}{4f^{2}}[ &<&\tilde{X},\phi \tilde{Z}><\phi \tilde{Y},\tilde{W}>-<%
\tilde{Y},\phi \tilde{Z},><\phi \tilde{X},\tilde{W}> \\
+2 &<&\tilde{X},\phi \tilde{Y}><\phi \tilde{Z},\tilde{W}>]
\end{eqnarray*}%
and $[\mathcal{K},\mathcal{K}]=0.$
\end{proposition}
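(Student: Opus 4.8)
The plan is to feed the constant-holomorphic-sectional-curvature tensor of the fiber into the warped-product curvature identities of Lemma~\ref{TO3}, carrying out all bookkeeping in the warped metric $\langle\,,\,\rangle=dt^2+f^2g$. First I would unpack the hypothesis that $N(c)$ is a statistical complex space form: it provides that the two dual fiber curvatures coincide, $R^N=R^{\ast N}$, that each is given by the expression (\ref{HOLC}) built from the fiber metric $g$, and that $[\mathcal{K},\mathcal{K}]=0$ on $N$. Because parts $(a)$--$(d)$ and $(a^{\ast})$--$(d^{\ast})$ of Lemma~\ref{TO3} show that the warping corrections to $R$ and to $R^{\ast}$ are literally identical and involve only $f'$ and $f''$, the equality $\tilde R=\tilde R^{\ast}$ on the whole warped product will follow at once from $R^N=R^{\ast N}$; so it suffices to compute $\tilde R$.

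Next I would write each argument as $\tilde X=\eta(\tilde X)\partial_t+X$ with $X$ tangent to the fiber, and use multilinearity of the curvature tensor to split $\tilde R(\tilde X,\tilde Y,\tilde Z,\tilde W)=\langle\tilde R(\tilde X,\tilde Y)\tilde Z,\tilde W\rangle$ according to how many of the four slots carry a $\partial_t$-component. Parts $(a)$--$(c)$ of Lemma~\ref{TO3} dispose of every term in which $\partial_t$ occurs, yielding the mixed terms weighted by $f''/f$ and $(f')^2/f^2$ and recorded through $\langle\tilde X,\partial_t\rangle=\eta(\tilde X)$, while part $(d)$, namely $R(V,W)U=R^N(V,W)U-\frac{(f')^2}{f^2}[\langle W,U\rangle V-\langle V,U\rangle W]$, handles the purely vertical contribution.

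The substantive computation is that purely vertical piece. Into $R^N$ I would insert (\ref{HOLC}) and then convert every fiber inner product by $g(\cdot,\cdot)=f^{-2}\langle\cdot,\cdot\rangle$ on vertical vectors, which turns each $\tfrac{c}{4}$ into $\tfrac{c}{4f^2}$. The two metric terms of (\ref{HOLC}) then merge with the $-\tfrac{(f')^2}{f^2}$ warping term to produce the coefficient $\tfrac{c}{4f^2}-\tfrac{(f')^2}{f^2}$ on the $\langle\,,\,\rangle$-only block $\langle\tilde Y,\tilde Z\rangle\langle\tilde X,\tilde W\rangle-\langle\tilde X,\tilde Z\rangle\langle\tilde Y,\tilde W\rangle$, while the three $J$-terms are rewritten via the defining relation $\phi\tilde X=JX$ of (\ref{CONTACT3}), using $g(JX,Y)=f^{-2}\langle\phi\tilde X,\tilde Y\rangle$, to give the holomorphic block with coefficient $\tfrac{c}{4f^2}$. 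Reassembling the mixed $\partial_t$-terms from $(a)$--$(c)$ then yields the stated coefficient $\tfrac{c}{4f^2}-\tfrac{(f')^2}{f^2}+\tfrac{f''}{f}$ on the terms quadratic in $\eta$.

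Finally, for $[\mathcal{K},\mathcal{K}]=0$ I would invoke the preceding theorem, which gives $\tilde{\mathcal{K}}_XY=\mathcal{K}_XY$ for vertical $X,Y$ together with $\tilde{\mathcal{K}}_X\xi=\tilde{\mathcal{K}}_\xi X=\tilde{\mathcal{K}}_\xi\xi=0$. Hence $[\tilde{\mathcal{K}},\tilde{\mathcal{K}}]$ vanishes on any argument involving $\xi$ and reduces to $[\mathcal{K},\mathcal{K}]^N$ on vertical vectors, which is zero because $N(c)$ is a statistical complex space form. The main obstacle I anticipate is the bookkeeping rather than any conceptual difficulty: keeping the rescaling factors $f^2$ straight when passing between $g$ and $\langle\,,\,\rangle$, and matching the three $J$-terms of (\ref{HOLC}) against the exact $\phi$-terms of the target formula, since a single misplaced factor of $f^2$ or sign in the holomorphic block would corrupt the final coefficients.
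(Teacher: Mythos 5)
Your proposal is correct and is exactly the argument the paper intends: the paper offers no written proof of Proposition \ref{PRP}, only the remark that it follows by the methods of \cite{LO}, and those methods are precisely your combination of Lemma \ref{TO3} (splitting each argument into its $\partial_t$-component and its vertical part) with the fiber curvature (\ref{HOLC}) and the rescaling $g=f^{-2}\langle \, ,\, \rangle$ on vertical vectors, together with the reduction of $[\mathcal{K},\mathcal{K}]$ to the fiber via $\mathcal{\tilde{K}}_{\xi }=0$. The only caution is that (\ref{HOLC}) as printed carries the opposite sign convention to the first block of the Proposition (and a typo, $g(JX,Z)Y$ for $g(JX,Z)JY$), so when you ``insert (\ref{HOLC})'' you must use the convention that matches the target formula; this is an inconsistency in the paper's own statement of (\ref{HOLC}), not a gap in your plan.
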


\begin{remark}
In \cite{FURUHATA} Furuhata, Hasegawa,Okuyama an Sato introduced Kenmotsu
statistical manifolds. They proved that if $M$ has a holomorphic statistical
structure, $(N=%
\mathbb{R}
\times _{e^{t}}M,<,>,\phi ,\xi )$ is Kenmotsu manifold satisfying property $%
\mathcal{\tilde{K}}_{X}Y=\mathcal{K}_{X}Y,\mathcal{\tilde{K}}_{X}\xi =%
\mathcal{\tilde{K}}_{\xi }X=0,\mathcal{\tilde{K}}_{\xi }\xi =\lambda \xi $
then $N$ has a holomorphic statistical structure,where $\lambda \in
C^{\infty }(N).$e now give a new example of a statistical warped product
manifold.
\end{remark}

\begin{example}[\protect \cite{CMBS}]
We consider $(%
\mathbb{R}
^{2},\tilde{g}=dx^{2}+dy^{2})$ Euclid space and define the affine connection
by

\begin{eqnarray}
\tilde{\nabla}_{\frac{\partial }{\partial x}}^{2}\frac{\partial }{\partial x}
&=&\frac{\partial }{\partial y},\text{ }\tilde{\nabla}_{\frac{\partial }{%
\partial y}}^{2}\frac{\partial }{\partial y}=0,  \label{R2} \\
\tilde{\nabla}_{\frac{\partial }{\partial x}}^{2}\frac{\partial }{\partial y}
&=&\tilde{\nabla}_{\frac{\partial }{\partial y}}^{2}\frac{\partial }{%
\partial x}=\frac{\partial }{\partial x}.  \notag
\end{eqnarray}%
Then its conjugate $\tilde{\nabla}^{2\ast }$ is given as follows;%
\begin{eqnarray}
\tilde{\nabla}_{\frac{\partial }{\partial x}}^{2\ast }\frac{\partial }{%
\partial x} &=&-\frac{\partial }{\partial y},\text{ }\tilde{\nabla}_{\frac{%
\partial }{\partial y}}^{2\ast }\frac{\partial }{\partial y}=0,  \label{R1}
\\
\tilde{\nabla}_{\frac{\partial }{\partial x}}^{2\ast }\frac{\partial }{%
\partial y} &=&\tilde{\nabla}_{\frac{\partial }{\partial y}}^{2\ast }\frac{%
\partial }{\partial x}=-\frac{\partial }{\partial x}.  \notag
\end{eqnarray}%
Thus we can verify that $(%
\mathbb{R}
^{2},\tilde{\nabla}^{2},\tilde{g})$ \ is a statistical manifold of constant
curvature $-1$. An affine connection and its conjugate connection are
defined on $(%
\mathbb{R}
,dt^{2})$ Euclidean space as following%
\begin{equation*}
\tilde{\nabla}_{\frac{\partial }{\partial t}}^{1}\frac{\partial }{\partial t}%
=0,\tilde{\nabla}_{\frac{\partial }{\partial t}}^{1\ast }\frac{\partial }{%
\partial t}=-0
\end{equation*}%
On the other hand , $%
\mathbb{R}
\times _{e^{t}}%
\mathbb{R}
^{2},<,>=dt^{2}+e^{2t}(dx^{2}+dy^{2}))$ is a warped product model of
hypebolic space $(\tilde{H}^{3}=\{(x,y,z)\in 
\mathbb{R}
^{3}\mid z>0\},\tilde{g}_{\tilde{H}^{3}}=\frac{dx^{2}+dy^{2}+dz^{2}}{z^{2}})$
and it has natural Kenmotsu structure. We \ also have $(%
\mathbb{R}
\times _{e^{t}}%
\mathbb{R}
^{2},<,>)$ is a statistic manifold with following affine connection $\bar{%
\nabla}$ ;%
\begin{eqnarray*}
\bar{\nabla}_{\frac{\partial }{\partial t}}\frac{\partial }{\partial t} &=&0,%
\text{ }\bar{\nabla}_{\frac{\partial }{\partial t}}\frac{\partial }{\partial
x}=\frac{\partial }{\partial x},\bar{\nabla}_{\frac{\partial }{\partial t}}%
\frac{\partial }{\partial y}=\frac{\partial }{\partial y}, \\
\bar{\nabla}_{\frac{\partial }{\partial x}}\frac{\partial }{\partial t} &=&%
\frac{\partial }{\partial x},\text{ }\bar{\nabla}_{\frac{\partial }{\partial
x}}\frac{\partial }{\partial x}=\frac{\partial }{\partial y}-e^{2t}\frac{%
\partial }{\partial t},\bar{\nabla}_{\frac{\partial }{\partial x}}\frac{%
\partial }{\partial y}=\frac{\partial }{\partial x}, \\
\bar{\nabla}_{\frac{\partial }{\partial y}}\frac{\partial }{\partial t} &=&%
\frac{\partial }{\partial y},\text{ }\bar{\nabla}_{\frac{\partial }{\partial
y}}\frac{\partial }{\partial x}=\frac{\partial }{\partial x},\bar{\nabla}_{%
\frac{\partial }{\partial y}}\frac{\partial }{\partial y}=-e^{2t}\frac{%
\partial }{\partial t}.
\end{eqnarray*}
\end{example}

\section{Generalized Wintgen Inequality for almost $(-\frac{f^{\prime }(t)}{%
f(t)})-$Kenmotsu statistical manifold}

Let $\bar{M}^{m}$ be a complex $m$- dimensional (real 2m dimensional) almost
Hermitian manifold with Hermitian metric $g_{\bar{M}}$ and almost complex
structure $J$ and $N^{n}$ be a Riemannian manifold with Riemannian metric $%
g_{N}.$ If $J$($T_{p}N)\subset T_{p}^{\perp }N,$at any point $p\in N,$then
is called totally real submanifold. In particular, a toatally real
submanifold of maximum dimension is called a Lagrangian submanifold.

Let$M^{n}$ be submanifold of $\tilde{M}^{2m+1}.$ $\phi $ maps any tangent
space of $M^{n}$ into the normal space, that is, $\phi (T_{p}M^{n})\subset
T_{p}^{\perp }\tilde{M}^{2m+1}$, for every $p\in M^{n}$ then $M^{n}$ is
called anti invarant submanifold. If $\dim (\tilde{M})=2\dim (M)+1$ and $\xi
_{p}$ is orthogonal to $T_{p}M$ for all $p\in M^{n}$ then $M^{n}$ is called
Legendre submanifold.

I. Mihai,\cite{ion},\cite{ion1} obtained the DDVV inequality,also known as
generalized Wintgen inequality for Lagrangian submanifold of a complex space
form $\bar{M}^{m}(4c)$ and Legendrian submanifolds in Sasakian space forms, 
\begin{equation*}
(\rho ^{\perp })^{2}\leq (\left \Vert H\right \Vert ^{2}-\rho +c)^{2}+\frac{4%
}{n(n-1)}(\rho -c)+\frac{2c^{2}}{n(n-1)},
\end{equation*}%
\begin{equation*}
(\rho ^{\perp })^{2}\leq (\left \Vert H\right \Vert ^{2}-\rho +c)^{2}+\frac{4%
}{n(n-1)}(\rho -\frac{c+3}{4})\frac{c-1}{4}+\frac{(c-1)^{2}}{8n(n-1)},
\end{equation*}%
respectively.

In \cite{BASJ}, the following theorem is proved

\begin{theorem}[\protect \cite{BASJ}]
Let $M^{n}$ be a Lagrangian submanifold of a holomorphic statistical space
form $\bar{M}^{m}(c)$. Then%
\begin{equation*}
(\rho ^{\perp })^{2}\geq \frac{c}{n(n-1)}(\rho -\frac{c}{4})+\frac{c}{%
(n-1)^{2}}[g(H^{\ast },H)-\left \Vert H\right \Vert \left \Vert H^{\ast
}\right \Vert ].
\end{equation*}
\end{theorem}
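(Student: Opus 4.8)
The plan is to feed the curvature of the holomorphic statistical space form into the Gauss and Ricci equations of Proposition \ref{Gauss} and relations (\ref{RICCI})--(\ref{DRICCI}), to read off $\rho$ and $\rho^{\perp}$ in terms of the two second fundamental forms, and then to extract the stated lower bound from an algebraic inequality whose sharp form is supplied by Cauchy--Schwarz. The decisive preliminary step is to exploit the Lagrangian condition $J(T_pM)=T_p^{\perp}M$ (here $m=n$): fixing an orthonormal tangent frame $\{e_1,\dots,e_n\}$, the fields $\{Je_1,\dots,Je_n\}$ form an orthonormal normal frame, and setting $h_{ij}^{k}=\langle h(e_i,e_j),Je_k\rangle$, $h_{ij}^{*k}=\langle h^{*}(e_i,e_j),Je_k\rangle$, torsion-freeness of $\nabla,\nabla^{*}$ gives symmetry in $i,j$ while the Kähler condition on $\bar M^m(c)$ forces the additional symmetry $h_{ij}^{k}=h_{ik}^{j}$. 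Thus both cubic forms are totally symmetric, each shape operator $A_{Je_k}=(h^{k}_{ij})$ and $A^{*}_{Je_k}=(h^{*k}_{ij})$ is a symmetric endomorphism of $T_pM$, and this is the structural fact that makes the computation close.

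First I would compute $\rho$. Inserting (\ref{HOLC}) into (\ref{GAUSSEQU})--(\ref{DGAUSSEQU}) and summing $\langle\tilde R(e_i,e_j)e_j,e_i\rangle$ over $i<j$, every holomorphic term $g(Je_i,e_j)$ vanishes because $e_i\perp Je_j$ on a Lagrangian submanifold, so the ambient contribution reduces to the constant-sectional part and produces a fixed multiple of $c$. Using Proposition \ref{PRP}, where $[\mathcal K,\mathcal K]=0$ and hence $R+R^{*}=2R^{0}$, the extrinsic remainder collapses to a quadratic expression in the $h_{ij}^{k},h_{ij}^{*k}$, yielding $\rho$ as $\tfrac c4$ together with the Gauss defect $\langle H,H^{*}\rangle-\tfrac1{n(n-1)}\sum_{i,j,k}h_{ij}^{k}h_{ij}^{*k}$ up to the normalizing constant of (\ref{ro}).

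Next I would treat $\rho^{\perp}$. Feeding (\ref{HOLC}) into the Ricci equations (\ref{RICCI})--(\ref{DRICCI}) with $\xi=Je_\alpha$, $\eta=Je_\beta$, the holomorphic terms now survive because $g(Je_i,Je_\alpha)=g(e_i,e_\alpha)=\delta_{i\alpha}$, so the ambient normal curvature $\langle\tilde R(e_i,e_j)Je_\alpha,Je_\beta\rangle$ contributes a definite $c$-term, while the extrinsic part is carried by the mixed commutators $[A^{*}_{Je_\alpha},A_{Je_\beta}]$. Substituting into the definition (\ref{ro dik}) expresses $(\rho^{\perp})^{2}$ through the squared components $\langle R^{\perp}(e_i,e_j)Je_\alpha,Je_\beta\rangle^{2}$, each of which is a sum of a fixed holomorphic entry and a commutator entry built from the symmetric matrices $A_{Je_k},A^{*}_{Je_k}$.

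The main obstacle is the final algebraic passage, and it is here that the bound becomes a \emph{lower} estimate rather than the classical upper one. Because the statistical structure pairs the $\nabla$-shape operators against the $\nabla^{*}$-shape operators, the quadratic form governing the commutator entries is of mixed, non-positive-definite type, so the Lu--Ge--Tang inequality is not directly available; instead I would obtain the lower bound by restricting the sum in (\ref{ro dik}) to the diagonally paired indices $(\alpha,\beta)=(i,j)$ permitted by the Lagrangian identification, which isolates a controllable sub-sum of $(\rho^{\perp})^{2}$. Applying the Cauchy--Schwarz inequality in the form $\operatorname{tr}(A_{Je_\alpha}A^{*}_{Je_\beta})\ge-\,\|A_{Je_\alpha}\|\,\|A^{*}_{Je_\beta}\|$ to the cross terms then produces exactly the correction $\tfrac{c}{(n-1)^{2}}\big(g(H^{*},H)-\|H\|\,\|H^{*}\|\big)$, while the fixed holomorphic entries assemble into $\tfrac{c}{n(n-1)}\big(\rho-\tfrac c4\big)$ after substituting the expression for $\rho$ found above; combining the two gives the claimed inequality, with equality governed by the case of equality in Cauchy--Schwarz.
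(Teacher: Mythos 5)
First, note that the paper itself offers no proof of this statement: it is imported verbatim from \cite{BASJ} (``In \cite{BASJ}, the following theorem is proved''), so there is no internal argument to compare yours against and I can only judge the proposal on its own terms. Your overall strategy --- feed (\ref{HOLC}) into the Gauss equations (\ref{GAUSSEQU})--(\ref{DGAUSSEQU}) and the Ricci equations (\ref{RICCI})--(\ref{DRICCI}), use the Lagrangian frame $\{Je_1,\dots,Je_n\}$ so that the holomorphic part of the ambient curvature survives only in the normal bundle, restrict the double sum defining $(\rho^{\perp})^{2}$ to the pairs $(\alpha,\beta)=(i,j)$, discard the nonnegative squares and control the cross terms by Cauchy--Schwarz --- is indeed the standard route to such lower bounds and is consistent with how \cite{BASJ} and \cite{AYMM} proceed. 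One correction along the way: the total symmetry of $\langle h(X,Y),JZ\rangle$ does not follow from ``the K\"{a}hler condition,'' since $J$ is not $\nabla$-parallel on a holomorphic statistical manifold; it follows from combining $\tilde{\nabla}^{\ast}_{X}(JZ)=J\tilde{\nabla}_{X}Z$ (which is what $\nabla\Omega=0$ actually says) with the duality $X\langle Y,JZ\rangle=\langle\tilde{\nabla}_{X}Y,JZ\rangle+\langle Y,\tilde{\nabla}^{\ast}_{X}(JZ)\rangle$, and it is precisely this mixing of the two connections that makes $h$ and $h^{\ast}$ enter the final inequality asymmetrically.

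The genuine gap is in your last paragraph, where the entire content of the theorem lives. You assert that the diagonal restriction plus Cauchy--Schwarz ``produces exactly'' the two terms $\frac{c}{n(n-1)}(\rho-\frac{c}{4})$ and $\frac{c}{(n-1)^{2}}[g(H^{\ast},H)-\Vert H\Vert\Vert H^{\ast}\Vert]$, but you never exhibit the identity converting $\sum_{i<j}g([A^{\ast}_{Je_i},A_{Je_j}]e_i,e_j)$, via the totally symmetric components $h^{k}_{ij},h^{\ast k}_{ij}$, into $\langle h,h^{\ast}\rangle$- and $\mathrm{tr}\,A\cdot\mathrm{tr}\,A^{\ast}$-type quantities; the two distinct normalizations $\frac{1}{n(n-1)}$ and $\frac{1}{(n-1)^{2}}$ can only be certified by that explicit count, so as written the decisive step is a claim, not a proof. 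Worse, the inequalities you invoke there are all multiplied by $c$: the elementary bound $(\frac{c}{4}+x)^{2}\geq\frac{c^{2}}{16}+\frac{c}{2}x$ is sign-free, but the subsequent estimate $c\cdot\mathrm{tr}(A_{Je_\alpha}A^{\ast}_{Je_\beta})\geq -c\,\Vert A_{Je_\alpha}\Vert\,\Vert A^{\ast}_{Je_\beta}\Vert$ holds only for $c\geq 0$ and reverses for $c<0$. Nothing in the hypotheses restricts the sign of $c$ (the paper's own example is a statistical space form of curvature $-1$), so at that point you need a case analysis, absolute values, or a sign-free identity; without one the proposed chain of inequalities does not close.
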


Now, we will prove Generalized Wintgen Inequality for almost $(-\frac{%
f^{\prime }(t)}{f(t)})-$Kenmotsu statistical manifold.

\begin{theorem}
Let $(%
\mathbb{R}
,dt,^{%
\mathbb{R}
}\nabla )$ be trival statistical manifold and\ $N(c)$ be a holomorphic
statistical space form. If $M^{n}$ is a Legendrian submanifold of the
statistical warped product manifold $\tilde{M}=%
\mathbb{R}
\times _{f}N(c)$ then we have 
\begin{eqnarray*}
\rho ^{\perp }{}^{\nabla ,\nabla ^{\ast }} &\leq &2\rho ^{\nabla ,\nabla
^{\ast }}-8\rho ^{0}+\frac{1}{4f^{2}}(2f\mid c\mid -c+4(f^{\prime })^{2}) \\
+4 &\parallel &H^{0}\parallel ^{2}+\parallel H\parallel ^{2}+\parallel
H^{\ast }\parallel ^{2}
\end{eqnarray*}
\end{theorem}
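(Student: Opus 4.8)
The plan is to derive the inequality from the statistical Gauss and Ricci equations, after reducing the ambient curvature to closed form via Proposition \ref{PRP}, and then to close the estimate with the algebraic DDVV inequality.

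First I would fix an orthonormal tangent frame $\{e_1,\dots,e_n\}$ on $M^n$ and take $\{\phi e_1,\dots,\phi e_n,\xi\}$ as the induced orthonormal frame of $T^{\perp}M$, which is legitimate precisely because $M$ is Legendrian, so that $\eta(e_i)=\langle e_i,\partial_t\rangle=0$ and $\phi e_i\in\Gamma(T^{\perp}M)$. With this frame the ambient curvature of Proposition \ref{PRP} simplifies drastically: on four tangent inputs every $\partial_t$-factor vanishes and, since $\langle e_i,\phi e_k\rangle=0$, the entire holomorphic block disappears, leaving only the sectional part $\left(\frac{c}{4f^2}-\frac{(f')^2}{f^2}\right)(\delta_{jk}\delta_{il}-\delta_{ik}\delta_{jl})$; by contrast, on the mixed inputs $(e_i,e_j,\phi e_a,\phi e_b)$ the holomorphic block survives through $\langle e_i,\phi(\phi e_b)\rangle=-\delta_{ib}$ and contributes a term proportional to $\frac{c}{4f^2}$. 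This is the mechanism by which $c$ enters the normal part of the estimate.

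Next I would substitute these values into the Gauss equations (\ref{GAUSSEQU}) and (\ref{DGAUSSEQU}) and sum over $i<j$. Adding the two equations and using both $\tilde R=\tilde R^{\ast}$ and the identity $R+R^{\ast}=2R^0+2[\mathcal{K},\mathcal{K}]$ together with $[\mathcal{K},\mathcal{K}]=0$ from Proposition \ref{PRP}, the intrinsic scalar curvatures reorganize into exactly the combination $2\rho^{\nabla,\nabla^{\ast}}-8\rho^0$, while the second-fundamental-form contractions collect---after invoking $2h^0=h+h^{\ast}$ and $2H^0=H+H^{\ast}$---into the mean-curvature expression $4\|H^0\|^2+\|H\|^2+\|H^{\ast}\|^2$. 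In parallel I would feed the same ambient curvature into the Ricci equations (\ref{RICCI}) and (\ref{DRICCI}); after subtracting the now-explicit ambient normal curvature, the extrinsic normal curvature $\rho^{\perp\,\nabla,\nabla^{\ast}}$ is expressed entirely through the commutators $[A^{\ast}_{\alpha},A_{\beta}]$ and $[A_{\alpha},A^{\ast}_{\beta}]$ of the shape operators.

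The decisive step is the algebraic estimate bounding these commutator sums. I would pass to the symmetric Levi-Civita shape operators $A^0_{\alpha}=\tfrac12(A_{\alpha}+A^{\ast}_{\alpha})$ and apply the algebraic DDVV inequality of Lu \cite{lu} and of Ge and Tang \cite{Ge} to the family $\{A^0_{\alpha}\}$; the Legendrian symmetry of $h^0$, which makes $\langle A^0_{\phi e_i}e_j,e_k\rangle$ totally symmetric in $i,j,k$, is what licenses that inequality and controls the off-diagonal terms. The surviving holomorphic contribution to the normal curvature is then bounded by its modulus, which is the source of the term $\tfrac{|c|}{2f}=\tfrac{1}{4f^2}(2f|c|)$ and hence of the full ambient constant $\tfrac{1}{4f^2}(2f|c|-c+4(f')^2)$. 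The main obstacle is precisely this algebraic step: the statistical normal curvature pairs $A_{\alpha}$ asymmetrically with $A^{\ast}_{\beta}$, whereas the DDVV inequality is stated for a single family of symmetric matrices, so the real work is to symmetrize to $A^0$ and to verify that the correction terms $A_{\alpha}-A^{\ast}_{\alpha}$ drop out of the relevant combinations---which they do because $[\mathcal{K},\mathcal{K}]=0$ on $\tilde M$. The remaining frame bookkeeping and curvature substitutions, though lengthy, are routine.
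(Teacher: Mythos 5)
Your skeleton (Gauss equations for the scalar curvature, Ricci equations for the normal curvature, Lu's algebraic inequality for the commutators) matches the paper's, but the decisive algebraic step fails as you describe it. You propose to symmetrize the commutator sum to the single family $\{A^{0}_{\alpha}\}$ and assert that the correction terms coming from $A_{\alpha}-A^{\ast}_{\alpha}$ ``drop out because $[\mathcal{K},\mathcal{K}]=0$.'' They do not: from $2A^{0}=A+A^{\ast}$ one gets
\begin{equation*}
[A^{\ast}_{r},A_{s}]+[A_{r},A^{\ast}_{s}]=4[A^{0}_{r},A^{0}_{s}]-[A_{r},A_{s}]-[A^{\ast}_{r},A^{\ast}_{s}],
\end{equation*}
and the last two brackets have no reason to vanish; $[\mathcal{K},\mathcal{K}]=0$ is a statement about the ambient curvature-like tensor, not about the shape operators of the submanifold, and $h\neq h^{\ast}$ in general since $\mathcal{K}$ itself need not vanish. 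The paper keeps all three families, applies the Cauchy--Schwarz bound $(\lambda+\mu+\nu+w)^{2}\leq 4(\lambda^{2}+\mu^{2}+\nu^{2}+w^{2})$, and then applies Lu's inequality separately to the traceless operators $\{S^{0}_{r}\}$, $\{S_{r}\}$, $\{S^{\ast}_{r}\}$; this is precisely why the three separate terms $4\parallel H^{0}\parallel^{2}$, $\parallel H\parallel^{2}$, $\parallel H^{\ast}\parallel^{2}$ survive in the final bound. If the corrections really dropped out you would obtain a bound in $\parallel\tau^{0}\parallel^{2}$ alone, which cannot reproduce the stated right-hand side.

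Three smaller but real defects. First, the combination $2\rho^{\nabla,\nabla^{\ast}}-8\rho^{0}$ does not ``reorganize out of the Gauss equation'': the Gauss equations give $\rho^{\nabla,\nabla^{\ast}}$ in terms of $\parallel\tau\parallel^{2}+\parallel\tau^{\ast}\parallel^{2}$ and $\parallel\tau^{0}\parallel^{2}$, and the $-8\rho^{0}$ appears only at the very end when $\parallel\tau^{0}\parallel^{2}$ is traded for $\rho^{0}$ via the Levi-Civita Gauss equation; the coefficient $8$ is the sum of a $4$ from the normal-curvature estimate and a $4$ from that substitution, so your bookkeeping would not close. Second, you never dispose of the normal direction $\xi$: the normal scalar curvature sums over all pairs of normal directions including those containing $\xi=\partial_{t}$, and one needs $A_{\xi}=A^{\ast}_{\xi}=-\frac{f^{\prime}(t)}{f(t)}\,\mathrm{Id}$ (a consequence of the warped-product connection) to annihilate the corresponding commutators, together with $\langle\phi X,\xi\rangle=0$. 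Third, the ``total symmetry of the cubic form'' is not what licenses Lu's inequality, which holds for an arbitrary family of traceless symmetric operators; that remark is a red herring.
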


\begin{proof}
Let $M^{n}$ be an $n$-dimensional Legendrian real submanifold of a $2n+1$%
-dimensional almost $(-\frac{f^{\prime }(t)}{f(t)})-$Kenmotsu statistical
manifold $\tilde{M}=%
\mathbb{R}
\times _{f}N(c)$ and $\{e_{1},e_{2},...,e_{n}\}$ an orthonormal frame on $%
M^{n}$ and $\{e_{n+1}=\phi e_{1},e_{n+2}=\phi e_{2},...,e_{2n}=\phi
e_{n},e_{2n+1}=\xi \}$ an orthonormal frame in normal bundle $T^{\perp
}M^{n} $, respectively. By Proposition \ref{PRP} \ and (\ref{GAUSSEQU}), we
have 
\begin{eqnarray}
g_{M}(R(X,Y)Z,W) &=&<\tilde{R}(X,Y)Z,W>+<h^{\ast
}(X,W),h(Y,Z)>-<h(X,Z),h^{\ast }(Y,W)>  \notag \\
&=&[\frac{c}{4f^{2}}-\frac{(f^{\prime })^{2}}{f^{2}}][<Y,Z><X,W>-<X,Z><Y,W>]
\label{EQUA1} \\
+ &<&h^{\ast }(X,W),h(Y,Z)>-<h(X,Z),h^{\ast }(Y,W)>.  \notag
\end{eqnarray}%
and 
\begin{eqnarray}
g_{M}(R^{\ast }(X,Y)Z,W) &=&[\frac{c}{4f^{2}}-\frac{(f^{\prime })^{2}}{f^{2}}%
][<Y,Z><X,W>-<X,Z><Y,W>]  \notag \\
+ &<&h(X,W),h^{\ast }(Y,Z)>-<h^{\ast }(X,Z),h(Y,W)>.  \label{EQUA2A}
\end{eqnarray}%
\textit{for }$X,Y,Z,W\in \Gamma (TM)$. Setting $\ X=e_{i}=W,Y=e_{j}=Z$ in (%
\ref{EQUA1}) and (\ref{EQUA2A}), we have%
\begin{eqnarray}
g_{M}(R(e_{i},e_{j})e_{i},e_{j}) &=&(\frac{c}{4f^{2}}-\frac{(f^{\prime })^{2}%
}{f^{2}})(<e_{j},e_{j}><e_{i},e_{i}>-<e_{i},e_{j}><e_{i},e_{j}>)  \notag \\
+ &<&h^{\ast }(e_{i},e_{i}),h(e_{j},e_{j})>-<h(e_{i},e_{j}),h^{\ast
}(e_{i},e_{j})>  \label{EQUA3A}
\end{eqnarray}%
and 
\begin{eqnarray}
g_{M}(R^{\ast }(e_{i},e_{j})e_{i},e_{j}) &=&(\frac{c}{4f^{2}}-\frac{%
(f^{\prime })^{2}}{f^{2}}%
)(<e_{j},e_{j}><e_{i},e_{i}>-<e_{i},e_{j}><e_{i},e_{j}>)  \notag \\
+ &<&h(e_{i},e_{i}),h^{\ast }(e_{j},e_{j})>-<h^{\ast
}(e_{i},e_{j}),h(e_{i},e_{j})>  \label{EQUA4A}
\end{eqnarray}

Using (\ref{EQUA1}) in (\ref{RICCI}),\ we have 
\begin{eqnarray}
&<&(R^{\bot }(X,Y)U,V>=\frac{c}{4f^{2}}(-<\phi X,U><\phi Y,V>  \label{RICCI2}
\\
+ &<&\phi X,V><\phi Y,U>)+g_{M}([A_{U}^{\ast },A_{V}]X,Y),  \notag
\end{eqnarray}%
If we make use of the equality (\ref{EQUA2A}) in (\ref{DRICCI})I, we obtain 
\begin{eqnarray}
&<&(R^{\ast \bot }(X,Y)\xi ,\eta >=\frac{c}{4f^{2}}(-<\phi X,U><\phi Y,V>
\label{DRICCI2} \\
+ &<&\phi X,V><\phi Y,U>)+g_{M}([A_{\xi },A_{\eta }^{\ast }]X,Y).  \notag
\end{eqnarray}%
Since $<\tilde{R}(X,Y)Z,W>$ is not skew-symmetric relative to $Z$ and $W$.
Then the sectional curvature on $\tilde{M}$ \ can not be defined. But $%
<R(X,Y)Z,W>+<R^{\ast }(X,Y)Z,W)>$ is skew-symmetric relative to $Z$ and $W.$
So the sectional curvature $K^{\nabla ,\nabla ^{\ast }}$ is defined by 
\begin{equation*}
K^{\nabla ,\nabla ^{\ast }}(X\wedge Y)=\frac{1}{2}[<R(X,Y)Y,X>+<R^{\ast
}(X,Y)Y,X)>],
\end{equation*}%
for any orthonormal vectors $X,Y,\in T_{p}M$, $p\in M,$ (see \cite{AYMM}).

In \cite{AYMM}, the normalized scalar curvature $\rho ^{\nabla ,\nabla
^{\ast }}$and the normalized normal scalar curvature $\rho ^{\perp
}{}^{\nabla ,\nabla ^{\ast }}$are respectively defined by 
\begin{eqnarray*}
\rho ^{\nabla ,\nabla ^{\ast }} &=&\frac{2}{n(n-1)}\dsum \limits_{1\leq
i<j\leq n}K^{\nabla ,\nabla ^{\ast }}(e_{i}\wedge e_{j}) \\
&=&\frac{2}{n(n-1)}\dsum \limits_{1\leq i<j\leq
n}<R(e_{i},e_{j})e_{j},e_{i}>+<R^{\ast }(e_{i},e_{j})e_{j},e_{i}>]
\end{eqnarray*}%
and 
\begin{equation*}
\rho ^{\perp }{}^{\nabla ,\nabla ^{\ast }}=\frac{1}{n(n-1)}\left \{ \dsum
\limits_{n+1\leq r<s\leq 2n}\dsum \limits_{1\leq i<j\leq n}\left[ <R^{\perp
}(e_{i},e_{j})e_{r},e_{s}>+<R^{\ast \perp }(e_{i},e_{j})e_{r},e_{s}>\right]
^{2}\right \} ^{1/2},
\end{equation*}%
where $\{e_{1},...,e_{n}\}$ and $\{e_{n+1}=\phi e_{1},...,e_{2n}=\phi
e_{n},e_{2n+1}=\xi \}$ are respectively orthonormal basis of $T_{p}M$ and $%
T_{p}^{\perp }M$ for $p\in M$. Due to the equations (\ref{EQUA2A}) and (\ref%
{EQUA3A}) ,we obtain%
\begin{eqnarray*}
\rho ^{\nabla ,\nabla ^{\ast }} &=&\frac{1}{2n(n-1)}\dsum \limits_{i\neq j}[(%
\frac{c}{4f^{2}}-\frac{(f^{\prime })^{2}}{f^{2}})+<h^{\ast
}(e_{i},e_{i}),h(e_{j},e_{j})>-<h(e_{i},e_{j}),h^{\ast }(e_{i},e_{j})> \\
+(\frac{c}{4f^{2}}-\frac{(f^{\prime })^{2}}{f^{2}})+
&<&h(e_{i},e_{i}),h^{\ast }(e_{j},e_{j})>-<h^{\ast
}(e_{i},e_{j}),h(e_{i},e_{j})> \\
&=&(\frac{c}{4f^{2}}-\frac{(f^{\prime })^{2}}{f^{2}})+\frac{1}{2n(n-1)}\dsum
\limits_{i\neq j}[<h^{\ast
}(e_{i},e_{i}),h(e_{j},e_{j})>+<h(e_{i},e_{i}),h^{\ast }(e_{j},e_{j})> \\
-2 &<&h(e_{i},e_{j}),h^{\ast }(e_{i},e_{j})> \\
&=&(\frac{c}{4f^{2}}-\frac{(f^{\prime })^{2}}{f^{2}})+\frac{1}{2n(n-1)}\dsum
\limits_{i\neq j}[<h(e_{i},e_{i})+h^{\ast }(e_{i},e_{i}),h^{\ast
}(e_{j},e_{j})+h(e_{j},e_{j})> \\
- &<&h(e_{i},e_{i}),h(e_{j},e_{j})>-<h^{\ast }(e_{j},e_{j}),h^{\ast
}(e_{j},e_{j})> \\
-( &<&h(e_{i},e_{j})+h^{\ast }(e_{i},e_{j}),h(e_{i},e_{j})+h^{\ast
}(e_{i},e_{j}>-<h(e_{i},e_{j}),h(e_{i},e_{j})> \\
- &<&h^{\ast }(e_{i},e_{j}),h^{\ast }(e_{i},e_{j})>)
\end{eqnarray*}%
Because of $2h^{0}=h+h^{\ast }$ and $2H^{0}=H+H^{\ast },$ we thus get%
\begin{eqnarray*}
\rho ^{\nabla ,\nabla ^{\ast }} &=&(\frac{c}{4f^{2}}-\frac{(f^{\prime })^{2}%
}{f^{2}})+\frac{1}{2n(n-1)}\dsum \limits_{i\neq
j}[4<h^{0}(e_{i},e_{i}),h^{0}(e_{j},e_{j}))> \\
- &<&h(e_{i},e_{i}),h(e_{j},e_{j})>-<h^{\ast }(e_{j},e_{j}),h^{\ast
}(e_{j},e_{j})> \\
-(4 &<&h^{0}(e_{i},e_{j}),h^{0}(e_{i},e_{j})>-<h(e_{i},e_{j}),h(e_{i},e_{j})>
\\
- &<&h^{\ast }(e_{i},e_{j}),h^{\ast }(e_{i},e_{j})>)
\end{eqnarray*}%
\begin{eqnarray*}
\rho ^{\nabla ,\nabla ^{\ast }} &=&(\frac{c}{4f^{2}}-\frac{(f^{\prime })^{2}%
}{f^{2}})+\frac{1}{2n(n-1)}[4n^{2}\parallel H^{0}\parallel
^{2}-n^{2}\parallel H\parallel ^{2}-n^{2}\parallel H^{\ast }\parallel ^{2} \\
-(4 &\parallel &h^{0}\parallel ^{2}-\parallel h\parallel ^{2}-\parallel
h^{\ast }\parallel ^{2}].
\end{eqnarray*}%
Denote $\tau ^{0}=h^{0}-H^{0}g$, $\tau =h-Hg$ and $\tau ^{\ast }=h^{\ast
}-H^{\ast }g$ the traceless part of second fundamental forms. Then we find $%
\parallel \tau ^{0}\parallel ^{2}=\parallel h^{0}\parallel
^{2}-n^{2}\parallel H^{0}\parallel ^{2}$, $\parallel \tau \parallel
^{2}=\parallel h\parallel ^{2}-n^{2}\parallel H\parallel ^{2}$ and $%
\parallel \tau ^{\ast }\parallel ^{2}=\parallel h^{\ast }\parallel
^{2}-n^{2}\parallel H^{\ast }\parallel ^{2}$. Thus, we get%
\begin{eqnarray*}
\rho ^{\nabla ,\nabla ^{\ast }} &=&(\frac{c}{4f^{2}}-\frac{(f^{\prime })^{2}%
}{f^{2}})+\frac{1}{2n(n-1)}[4n^{2}\parallel H^{0}\parallel
^{2}-n^{2}\parallel H\parallel ^{2}-n^{2}\parallel H^{\ast }\parallel ^{2} \\
-(4 &\parallel &\tau ^{0}\parallel ^{2}+4n\parallel H^{0}\parallel
^{2}-\parallel \tau \parallel ^{2}-n\parallel H\parallel ^{2}-\parallel \tau
^{\ast }\parallel ^{2}-n\parallel H^{\ast }\parallel ^{2}).
\end{eqnarray*}%
This relation gives rise to 
\begin{eqnarray}
\rho ^{\nabla ,\nabla ^{\ast }} &=&\frac{c}{4f^{2}}-\frac{(f^{\prime })^{2}}{%
f^{2}}  \notag \\
+2 &\parallel &H^{0}\parallel ^{2}-\frac{2}{n(n-1)}\parallel \tau
^{0}\parallel ^{2}  \notag \\
-\frac{1}{2} &\parallel &H\parallel ^{2}+\frac{1}{2n(n-1)}\parallel \tau
\parallel ^{2}  \label{RODELTATDELTASTAR} \\
-\frac{1}{2} &\parallel &H^{\ast }\parallel ^{2}+\frac{1}{2n(n-1)}\parallel
\tau ^{\ast }\parallel ^{2}.  \notag
\end{eqnarray}%
From (\ref{RICCI2}) and (\ref{DRICCI2}), the normalized normal scalar
curvature satisfies 
\begin{equation*}
\rho ^{\perp }{}^{\nabla ,\nabla ^{\ast }}=\frac{1}{n(n-1)}\left \{ \dsum
\limits_{1\leq r<s\leq n+1}\dsum \limits_{1\leq i<j\leq n}\left[ 
\begin{array}{c}
g([A_{e_{n+r}}^{\ast
},A_{e_{n+s}}]e_{i},e_{j})+g([A_{e_{n+r}},A_{e_{n+s}}^{\ast }]e_{i},e_{j})
\\ 
+\frac{2c}{4f^{2}}(-<\phi e_{i},e_{n+r}><\phi e_{j},e_{n+s}>+ \\ 
+<\phi e_{i},e_{n+s}><\phi e_{j},e_{n+r}>))%
\end{array}%
\right] ^{2}\right \} ^{1/2}
\end{equation*}%
\begin{equation}
=\frac{1}{n(n-1)}\left \{ \dsum \limits_{1\leq r<s\leq n+1}\dsum
\limits_{1\leq i<j\leq n}\left[ 
\begin{array}{c}
g([A_{e_{n+r}}^{\ast },A_{e_{n+s}}]e_{i},e_{j}) \\ 
+g([A_{e_{n+r}},A_{e_{n+s}}^{\ast }]e_{i},e_{j}) \\ 
-\frac{2c}{4f^{2}}(\delta _{ir}\delta _{js}-\delta _{is}\delta _{jr})%
\end{array}%
\right] ^{2}\right \} ^{1/2}  \label{MU1}
\end{equation}%
.By Proposition \ref{TO1} and the equations \ref{GAUSS1*}, \ref{GAUSS2},we
have 
\begin{equation}
A_{\xi }X=A_{\xi }^{\ast }X=-\frac{f^{\prime }(t)}{f(t)}X\text{ .}
\label{ALMU}
\end{equation}%
So we have 
\begin{eqnarray}
g([A_{\xi }^{\ast },A_{e_{n+s}}]e_{i},e_{j}) &=&g(A_{\xi }^{\ast
}A_{e_{n+s}}e_{i},e_{j})-g(A_{e_{n+s}}A_{\xi }^{\ast }e_{i},e_{j})  \notag \\
&&\overset{(\ref{ALMU})}{=}-\frac{f^{\prime }(t)}{f(t)}%
g(A_{e_{n+s}}e_{i},e_{j})+\frac{f^{\prime }(t)}{f(t)}%
g(A_{e_{n+s}}e_{i},e_{j})  \label{ALMU2} \\
&=&0  \notag
\end{eqnarray}%
and by using similar calculation we obtain 
\begin{equation}
([A_{e_{n+r}},A_{e_{n+s}}^{\ast }]e_{i},e_{j})=0.  \label{ALMU3}
\end{equation}%
On the other hand \ we recall 
\begin{equation}
<\phi X,\xi >=0.  \label{ALMU1}
\end{equation}%
Using the equations \ref{ALMU2}, \ref{ALMU3} and \ref{ALMU1} in \ref{MU1},
we find that 
\begin{equation*}
\rho ^{\perp }{}^{\nabla ,\nabla ^{\ast }}=\frac{1}{n(n-1)}\left \{ \dsum
\limits_{1\leq r<s\leq n+1}\dsum \limits_{1\leq i<j\leq n}\left[ 
\begin{array}{c}
g([A_{e_{n+r}}^{\ast
},A_{e_{n+s}}]e_{i},e_{j})+g([A_{e_{n+r}},A_{e_{n+s}}^{\ast }]e_{i},e_{j})
\\ 
-\frac{2c}{4f^{2}}(\delta _{ir}\delta _{js}-\delta _{is}\delta _{jr})%
\end{array}%
\right] ^{2}\right \} ^{1/2}
\end{equation*}%
which is equivalent 
\begin{equation}
\rho ^{\perp }{}^{\nabla ,\nabla ^{\ast }}=\frac{1}{n(n-1)}\left \{ \dsum
\limits_{1\leq r<s\leq n}\dsum \limits_{1\leq i<j\leq n}\left[ 
\begin{array}{c}
4g([A_{e_{n+r}}^{0},A_{e_{n+s}}^{0}]e_{i},e_{j})+g([A_{e_{n+r}},A_{e_{n+s}}]e_{i},e_{j})
\\ 
+g([A_{e_{n+r}}^{\ast },A_{n+s_{s}}^{\ast }]e_{i},e_{j})-\frac{2c}{4f^{2}}%
(\delta _{ir}\delta _{js}-\delta _{is}\delta _{jr})%
\end{array}%
\right] ^{2}\right \} ^{1/2},  \label{ALMU4}
\end{equation}%
where $2A_{\xi _{r}}^{0}=A_{\xi _{r}}+A_{\xi _{r}}^{\ast }$. By the
Cauchy--Schwarz inequality, we have the algebraic inequality%
\begin{equation}
(\lambda +\mu +\nu +w)^{2}\leq 4(\lambda ^{2}+\mu ^{2}+\nu
^{2}+w^{2}),\forall \lambda ,\mu ,\nu \in 
\mathbb{R}
.  \label{ALG}
\end{equation}%
We obtain from (\ref{ALG}) that 
\begin{eqnarray*}
\rho ^{\perp }{}^{\nabla ,\nabla ^{\ast }} &\leq &\frac{2}{n(n-1)}\left \{ 
\begin{array}{c}
\dsum \limits_{1\leq r<s\leq n}(\dsum \limits_{1\leq i<j\leq
m}(16g([A_{e_{n+r}}^{0},A_{e_{n+s}}^{0}]e_{i},e_{j})^{2}+g([A_{e_{n+r}},A_{e_{n+s}}]e_{i},e_{j})^{2}
\\ 
+g([A_{e_{n+r}}^{\ast },A_{n+s_{s}}^{\ast }]e_{i},e_{j})^{2}+\frac{c^{2}}{%
4f^{2}}(\delta _{ir}\delta _{js}-\delta _{is}\delta _{jr})^{2}%
\end{array}%
\right \} ^{1/2} \\
&\leq &\frac{2}{n(n-1)}\left \{ 
\begin{array}{c}
\frac{c^{2}}{4f^{2}}n^{2}(n-1)^{2}+\frac{1}{4}\dsum
\limits_{r,s=1}^{n}(\dsum \limits_{i,j=1}^{m}16g([A_{\xi _{r}}^{0},A_{\xi
_{s}}^{0}]e_{i},e_{j})^{2}+g([A_{\xi _{r}},A_{\xi _{s}}]e_{i},e_{j})^{2} \\ 
+g([A_{\xi _{r}}^{\ast },A_{\xi _{s}}^{\ast }]e_{i},e_{j}))^{2}%
\end{array}%
\right \} ^{1/2} \\
&\leq &\frac{2}{n(n-1)}\left \{ \frac{c^{2}}{4f^{2}}n^{2}(n-1)^{2}+\frac{1}{4%
}\dsum \limits_{r,s=1}^{n}(16\Vert \lbrack A_{\xi _{r}}^{0},A_{\xi
_{s}}^{0}\Vert ^{2}+\Vert \lbrack A_{\xi _{r}},A_{\xi _{s}}]\Vert ^{2}+\Vert
\lbrack A_{\xi _{r}}^{\ast },A_{\xi _{s}}^{\ast }]\Vert ^{2})\right \}
^{1/2}.
\end{eqnarray*}%
Now we define sets $\{S_{1}^{0},...,S_{n}^{0}\},$ $\{S_{1},...,S_{n}\},%
\{S_{1}^{\ast },...,S_{n}^{\ast }\}$of symmetric with trace zero operators
on $T_{p}M$ by 
\begin{eqnarray*}
&<&S_{\alpha }^{0}X,Y>=<\tau ^{0}(X,Y),\xi _{\alpha }>, \\
&<&S_{\alpha }X,Y>=<\tau (X,Y),\xi _{\alpha }>, \\
&<&S_{\alpha }^{\ast }X,Y>=<\tau ^{\ast }(X,Y),\xi _{\alpha }>
\end{eqnarray*}%
for all $X,Y,\in T_{p}M$, $p\in M$. Clearly, we obtain%
\begin{eqnarray*}
S_{\alpha }^{0} &=&A_{\xi _{\alpha }}^{0}-<H^{0},\xi _{\alpha }>I, \\
S_{\alpha } &=&A_{\xi _{\alpha }}-<H,\xi _{\alpha }>I, \\
S_{\alpha }^{\ast } &=&A_{\xi _{\alpha }}^{\ast }-<H^{\ast },\xi _{\alpha }>I
\end{eqnarray*}%
and%
\begin{eqnarray*}
\lbrack S_{\alpha }^{0},S_{\beta }^{0}] &=&[A_{\xi _{\alpha }}^{0},A_{\xi
_{\beta }}^{0}], \\
\lbrack S_{\alpha },S_{\beta }] &=&[A_{\xi _{\alpha }},A_{\xi _{\beta }}], \\
\lbrack S_{\alpha }^{\ast },S_{\beta }^{\ast }] &=&[A_{\xi _{\alpha }}^{\ast
},A_{\xi _{\beta }}^{\ast }].
\end{eqnarray*}%
Therefore, it is clear that 
\begin{equation}
\rho ^{\perp }{}^{\nabla ,\nabla ^{\ast }}\leq \frac{2}{n(n-1)}\left \{ 
\frac{c^{2}}{4f^{2}}n^{2}(n-1)^{2}+\frac{1}{4}\dsum
\limits_{r,s=1}^{n}(16\Vert \lbrack S_{r}^{0},S_{s}^{0}]\Vert ^{2}+\Vert
\lbrack S_{r},S_{s}]\Vert ^{2}+\Vert \lbrack S_{r}^{\ast },S_{s}^{\ast
}]\Vert ^{2})\right \} ^{1/2}.  \label{INEQUALITY}
\end{equation}%
In \cite{lu} Lu \ proved following theorem

\begin{theorem}[ \protect \cite{lu}]
\label{Lu}Let $M^{m}$ be Riemannian submanifold of Riemannian space form $%
\tilde{M}^{m+n}(c)$. For every set $\{B_{1},...,B_{n}\}$of symmetric $%
(n\times n)$-matrices with trace zero the following inequality holds:%
\begin{equation*}
\dsum \limits_{\alpha ,\beta =1}^{n}\Vert \lbrack B_{\alpha },B_{\beta
}]\Vert ^{2}\leq (\dsum \limits_{\alpha =1}^{n}\Vert B_{\alpha }\Vert
^{2})^{2}
\end{equation*}
\end{theorem}

By Theorem \ref{Lu}, (\ref{INEQUALITY}) can be written as%
\begin{eqnarray}
\rho ^{\perp }{}^{\nabla ,\nabla ^{\ast }} &\leq &\frac{\mid c\mid }{2f}+%
\frac{4}{n(n-1)}\dsum \limits_{r=1}^{n}\Vert S_{r}^{0}\Vert ^{2}+\frac{1}{%
n(n-1)}\dsum \limits_{r=1}^{n}\Vert \lbrack S_{r}\Vert ^{2}+\frac{1}{n(n-1)}%
\dsum \limits_{r=1}^{n}\Vert \lbrack S_{r}^{\ast }\Vert ^{2}  \notag \\
&\leq &\frac{\mid c\mid }{2f}+\frac{4}{n(n-1)}\parallel \tau ^{0}\parallel
^{2}+\frac{1}{n(n-1)}\parallel \tau \parallel ^{2}+\frac{1}{n(n-1)}\parallel
\tau ^{\ast }\parallel ^{2}.  \label{ROORTHOGONAL}
\end{eqnarray}%
Using (\ref{RODELTATDELTASTAR}) in (\ref{ROORTHOGONAL}), we get%
\begin{eqnarray}
\rho ^{\perp }{}^{\nabla ,\nabla ^{\ast }} &\leq &\frac{\mid c\mid }{2f}+%
\frac{8}{n(n-1)}\parallel \tau ^{0}\parallel ^{2}+2\rho ^{\nabla ,\nabla
^{\ast }}-\frac{2c}{4f^{2}}+\frac{2(f^{\prime })^{2}}{f^{2}}
\label{ROORTHOGONAL2} \\
-4 &\parallel &H^{0}\parallel ^{2}+\parallel H\parallel ^{2}+\parallel
H^{\ast }\parallel ^{2}  \notag
\end{eqnarray}%
The other hand normalized scalar curvature $\rho ^{0}$ of $M^{m}$ \ with
respect to Levi-civita connection $\nabla ^{0\text{ }}$can be obtained as%
\begin{equation}
\rho ^{0}=(\frac{c}{4f^{2}}-\frac{(f^{\prime })^{2}}{f^{2}})+\frac{1}{n(n-1)}%
[n^{2}\parallel H^{0}\parallel ^{2}-\parallel h^{0}\parallel ^{2}]
\label{ROZEROO}
\end{equation}%
(see \cite{ROTH}).

Now, If we set $\parallel \tau ^{0}\parallel ^{2}=\parallel h^{0}\parallel
^{2}-n\parallel H^{0}\parallel ^{2}$in (\ref{ROZEROO}) then we get%
\begin{equation}
\rho ^{0}=(\frac{c}{4f^{2}}-\frac{(f^{\prime })^{2}}{f^{2}})+\parallel
H^{0}\parallel ^{2}-\frac{1}{n(n-1)}\parallel \tau ^{0}\parallel ^{2}.
\label{ROZERO3}
\end{equation}%
In view of the equations (\ref{ROORTHOGONAL2}) and (\ref{ROZEROO}) we have%
\begin{eqnarray*}
\rho ^{\perp }{}^{\nabla ,\nabla ^{\ast }} &\leq &2\rho ^{\nabla ,\nabla
^{\ast }}-8\rho ^{0}+\frac{1}{4f^{2}}(2f\mid c\mid -c+4(f^{\prime })^{2}) \\
+4 &\parallel &H^{0}\parallel ^{2}+\parallel H\parallel ^{2}+\parallel
H^{\ast }\parallel ^{2}
\end{eqnarray*}%
which completes proof.
\end{proof}

\begin{corollary}
Let $(%
\mathbb{R}
,dt,^{%
\mathbb{R}
}\nabla )$ be trival statistical manifold and\ $N(c)$ be a holomorphic
statistical space form. If $M^{n}$ be a Legendrian submanifold of the
statistical Kenmotsu manifold $%
\mathbb{R}
\times _{e^{t}}%
\mathbb{C}
^{n}$, then we get 
\begin{equation*}
\rho ^{\perp }{}^{\nabla ,\nabla ^{\ast }}\leq 2\rho ^{\nabla ,\nabla ^{\ast
}}-8\rho ^{0}+4\parallel H^{0}\parallel ^{2}+\parallel H\parallel
^{2}+\parallel H^{\ast }\parallel ^{2}+1.
\end{equation*}%
In this case $%
\mathbb{R}
\times _{e^{t}}%
\mathbb{C}
^{n}$ is locally isometric to the hyperbolic space $H^{2n+1}(-1)$.
\end{corollary}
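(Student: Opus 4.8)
The plan is to prove the generalized Wintgen inequality by combining the Gauss equation for the statistical warped product with the algebraic matrix inequality of Lu, and then absorbing the Levi-Civita scalar curvature term. First I would fix a Legendrian submanifold $M^{n}$ in $\tilde{M}=\mathbb{R}\times _{f}N(c)$ and set up the adapted orthonormal frames $\{e_{1},\dots ,e_{n}\}$ on $TM$ and $\{\phi e_{1},\dots ,\phi e_{n},\xi \}$ on $T^{\perp }M$. Using Proposition \ref{PRP} together with the Gauss equations \eqref{GAUSSEQU} and \eqref{DGAUSSEQU}, I would compute the components $g_{M}(R(e_{i},e_{j})e_{i},e_{j})$ and the dual version, obtaining \eqref{EQUA3A} and \eqref{EQUA4A}. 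Summing over $i\neq j$ and symmetrizing gives an expression for the normalized scalar curvature $\rho ^{\nabla ,\nabla ^{\ast }}$; here the key bookkeeping device is the traceless decomposition $\tau ^{0}=h^{0}-H^{0}g$, $\tau =h-Hg$, $\tau ^{\ast }=h^{\ast }-H^{\ast }g$, which together with $2h^{0}=h+h^{\ast }$ converts the second-fundamental-form sums into the clean identity \eqref{RODELTATDELTASTAR}.

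Next I would handle the normal scalar curvature. Applying the Ricci equations \eqref{RICCI} and \eqref{DRICCI} with the curvature formula from Proposition \ref{PRP} yields \eqref{RICCI2} and \eqref{DRICCI2}. The crucial simplification comes from the Legendrian geometry: by Proposition \ref{TO1} and the Gauss/Weingarten formulas one gets $A_{\xi }X=A_{\xi }^{\ast }X=-\tfrac{f^{\prime }}{f}X$, so all commutator terms involving the $\xi$-direction shape operators vanish as in \eqref{ALMU2} and \eqref{ALMU3}, and $\langle \phi X,\xi \rangle =0$. This collapses the double sum in \eqref{MU1} to the range $1\le r<s\le n$ and produces \eqref{ALMU4}, where the shape operators are regrouped via $2A^{0}=A+A^{\ast }$.

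Then I would apply the Cauchy--Schwarz inequality \eqref{ALG} to split the squared sum into the three curvature-free pieces plus the constant $\tfrac{c^{2}}{4f^{2}}$ term, and pass to the symmetric trace-zero operators $S_{\alpha }^{0},S_{\alpha },S_{\alpha }^{\ast }$, noting that their commutators coincide with those of the corresponding shape operators. Invoking Lu's Theorem \ref{Lu} on each family bounds $\sum \Vert [S_{r},S_{s}]\Vert ^{2}$ by $(\sum \Vert S_{r}\Vert ^{2})^{2}$, and since $\sum _{r}\Vert S_{r}\Vert ^{2}=\Vert \tau \Vert ^{2}$ (and likewise for the $0$ and $\ast$ versions), I arrive at \eqref{ROORTHOGONAL}. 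Substituting \eqref{RODELTATDELTASTAR} to eliminate $\Vert \tau \Vert ^{2}+\Vert \tau ^{\ast }\Vert ^{2}$ in favor of $\rho ^{\nabla ,\nabla ^{\ast }}$ gives \eqref{ROORTHOGONAL2}.

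The final step is to remove the remaining traceless term $\tfrac{8}{n(n-1)}\Vert \tau ^{0}\Vert ^{2}$ using the Levi-Civita scalar curvature identity \eqref{ROZEROO}, rewritten as \eqref{ROZERO3}, which expresses $\Vert \tau ^{0}\Vert ^{2}$ through $\rho ^{0}$, $\Vert H^{0}\Vert ^{2}$, and the curvature constant; back-substitution collects the constants into $\tfrac{1}{4f^{2}}(2f\,|c|-c+4(f^{\prime })^{2})$ and yields the stated inequality. I expect the main obstacle to be the careful sign and index bookkeeping when symmetrizing the non-self-adjoint statistical curvature tensor and when regrouping the three shape-operator families after Cauchy--Schwarz; in particular one must verify that the mixed $\xi$-terms genuinely cancel (so that the normal-bundle sum truly reduces to the $\phi$-directions) and that the trace-zero normalizations $\Vert \tau ^{0}\Vert ^{2}=\Vert h^{0}\Vert ^{2}-n\Vert H^{0}\Vert ^{2}$ are applied consistently across the $\nabla$, $\nabla ^{\ast }$, and $\nabla ^{0}$ versions.
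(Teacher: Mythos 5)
Your write-up is not a proof of the corollary at hand; it is a recapitulation of the paper's proof of the preceding general theorem for a Legendrian submanifold of $\mathbb{R}\times _{f}N(c)$ (and as such it follows the paper's argument for that theorem step by step: Gauss and Ricci equations via Proposition \ref{PRP}, the vanishing of the $\xi$-commutators from $A_{\xi }=A_{\xi }^{\ast }=-\tfrac{f^{\prime }}{f}\,\mathrm{Id}$, Cauchy--Schwarz, Lu's lemma, and the elimination of $\Vert \tau ^{0}\Vert ^{2}$ through $\rho ^{0}$). The corollary, however, is the specialization of that theorem to $\tilde{M}=\mathbb{R}\times _{e^{t}}\mathbb{C}^{n}$, and the entire content of its proof is the evaluation of the curvature constant: $\mathbb{C}^{n}$ is the holomorphic statistical space form with $c=0$ and the warping function is $f=e^{t}$, so $(f^{\prime })^{2}/f^{2}=1$ and
\begin{equation*}
\frac{1}{4f^{2}}\bigl(2f\mid c\mid -c+4(f^{\prime })^{2}\bigr)=\frac{4e^{2t}}{4e^{2t}}=1,
\end{equation*}
which is where the $+1$ in the stated inequality comes from. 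You never perform this substitution; your argument terminates at the general constant $\tfrac{1}{4f^{2}}(2f\mid c\mid -c+4(f^{\prime })^{2})$, which does not match the claimed bound.

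You also leave the final assertion of the corollary untouched: that $\mathbb{R}\times _{e^{t}}\mathbb{C}^{n}$ is locally isometric to $H^{2n+1}(-1)$. This is the standard warped-product model of hyperbolic space, $\langle ,\rangle =dt^{2}+e^{2t}g_{\mathbb{C}^{n}}$ with $g_{\mathbb{C}^{n}}$ flat (the paper illustrates the case $n=1$ in its example with $\tilde{H}^{3}$), and it deserves at least a sentence. So while the machinery you describe is the right machinery for the parent theorem, as a proof of this corollary your proposal has a genuine gap: the two facts that actually constitute the corollary --- $c=0$ for $\mathbb{C}^{n}$ together with $f=e^{t}$ forcing the constant to equal $1$, and the identification with $H^{2n+1}(-1)$ --- are both missing.
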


\begin{corollary}
Let $(%
\mathbb{R}
,dt,^{%
\mathbb{R}
}\nabla )$ be trival statistical manifold and\ $N(c)$ be a holomorphic
statistical space form.If $M^{n}$ be a Legendrian submanifold of the
statistical cosymplectic manifold $%
\mathbb{R}
\times N(c)$, then we have%
\begin{equation*}
\rho ^{\perp }{}^{\nabla ,\nabla ^{\ast }}\leq 2\rho ^{\nabla ,\nabla ^{\ast
}}-8\rho ^{0}+4\parallel H^{0}\parallel ^{2}+\parallel H\parallel
^{2}+\parallel H^{\ast }\parallel ^{2}+\frac{1}{4}(2\mid c\mid -c)
\end{equation*}
\end{corollary}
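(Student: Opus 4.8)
The plan is to deduce this inequality as a direct specialization of the generalized Wintgen theorem established above, so that no new Gauss--Ricci--Lu computation is needed. The starting point is the identification, recorded in the corollary that followed the warped-product classification, that the product $\mathbb{R}\times N(c)$ carries its almost cosymplectic statistical structure exactly when the warping function is constant; concretely, the standard product metric $dt^{2}+g_{N}$ is the degenerate warped product with $f\equiv 1$, so that $f'=0$ and $f=1$ (hence $f^{2}=1$). First I would check that the hypotheses of the main theorem are inherited: $N(c)$ is a holomorphic statistical space form and $M^{n}\subset\mathbb{R}\times N(c)$ is Legendrian, hence the curvature description of Proposition~\ref{PRP} applies verbatim after setting $f''=f'=0$ and $f=1$, and since $-f'/f=0$ the ambient structure is almost $0$--Kenmotsu, i.e.\ cosymplectic.

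Next I would substitute into the conclusion of the main theorem. The warping function enters its right-hand side only through the scalar constant $\tfrac{1}{4f^{2}}\bigl(2f\,|c|-c+4(f')^{2}\bigr)$; the remaining terms $2\rho^{\nabla,\nabla^{\ast}}$, $-8\rho^{0}$, $4\|H^{0}\|^{2}$, $\|H\|^{2}$ and $\|H^{\ast}\|^{2}$ are intrinsic and extrinsic invariants of $M^{n}$ alone and are untouched by the choice of $f$. Putting $f=1$ and $f'=0$ collapses the constant to $\tfrac14(2|c|-c)$, which is precisely the asserted bound, completing the argument.

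The only point that needs genuine care --- and it is the sole obstacle, since there is no analytic difficulty once the main theorem is available --- is the normalization of $f$: an arbitrary constant $f=a\neq 0$ would give the constant $\tfrac{|c|}{2a}-\tfrac{c}{4a^{2}}$, and matching the stated value $\tfrac14(2|c|-c)$ forces $a=1$, so I would make explicit that it is the unwarped product metric that is intended. As a consistency check (and as an alternative to invoking the theorem as a black box) one may re-run the proof with $f$ constant: the $f''/f$ and $(f')^{2}/f^{2}$ contributions in Proposition~\ref{PRP} vanish, the shape operators of the structure vector field satisfy $A_{\xi}=A_{\xi}^{\ast}=0$ by~(\ref{ALMU}) so the $\xi$-row of the normal-curvature sum drops out automatically, and Lu's inequality (Theorem~\ref{Lu}) applied to the three trace-free families $\{S^{0}_{r}\}$, $\{S_{r}\}$, $\{S^{\ast}_{r}\}$ reproduces the bound with the simplified constant.
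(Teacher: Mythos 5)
Your proposal is correct and takes essentially the same route as the paper, which treats this corollary as an immediate specialization of the main theorem to a constant warping function, so that $f'=0$ and the curvature constant collapses to $\tfrac{1}{4}(2|c|-c)$. Your remark that the stated constant forces the normalization $f\equiv 1$ (rather than an arbitrary nonzero constant, as in the paper's earlier structural corollary on almost cosymplectic products) is apt and worth making explicit.
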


\end{document}